\newtheorem{theorem}{Theorem}[section]
\newtheorem{corollary}{Corollary}
\newtheorem{proposition}{Proposition}[section]
\newtheorem{remark}{Remark}[section]
\numberwithin{equation}{section}
\author{Mirko D'Ovidio}
        \address{Department of Basic and Applied Sciences for Engineering\newline
Sapienza University of Rome\newline via A. Scarpa 10, Rome, Italy}
\author{Francesco Iafrate}
        \address{Department of Basic and Applied Sciences for Engineering\newline
Sapienza University of Rome\newline via A. Scarpa 10, Rome, Italy}
       \email[Corresponding author]{francesco.iafrate@uniroma1.it}
\title{Elastic drifted Brownian motions and non-local boundary conditions}
\begin{document}
\date{\today}

\maketitle

\begin{abstract}
We provide a deep connection between elastic drifted Brownian motions and inverses to tempered subordinators. Based on this connection, we establish a link between multiplicative functionals and dynamical boundary conditions given in terms of non-local equations in time. Indeed, we show that the multiplicative functional associated to the elastic Brownian motion with drift is equivalent to a multiplicative functional associated with non-local boundary conditions of tempered type. By exploiting such connections we write some functionals of the drifted Brownian motion in terms of a simple (positive and non-decreasing) process, the inverse of tempered subordinator. In our view, such a representation is useful in many applications and brings new light on dynamic boundary value problems.
\end{abstract}


\section{Introduction}

In this paper we focus on elastic drifted Brownian motions and their governing equations equipped with fractional boundary conditions of the form
\begin{align}
\label{genFBC}
\mathfrak{D}^\Phi_t \varphi(t, 0) + c_1\, \varphi(t, 0) = c_2
\end{align}
where the constants $c_1, c_2$ will be better specified and $\mathfrak{D}^\Phi_t$ is a non-local operator characterized by the Bernstein symbol $\Phi$. The constants $c_1,c_2$ and the symbol $\Phi$ depend on the drift. In particular, for $\lambda\geq 0$, $\Phi(\lambda) = \sqrt{\lambda+\eta} - \sqrt{\eta}$ where $\eta\geq 0$ will be written in terms of the drift of the elastic Brownian motion. This symbol $\Phi$ introduces the so-called tempered fractional derivative with tempering parameter $\eta$ (see Section \ref{sec:temp}).

A first relevant fact is that the tempered fractional derivative turns out to be strictly related with the infinitesimal generator of the drifted Brownian motion. However, the condition \eqref{genFBC} is more than a surprising relation involving this generator. Indeed, we provide a deep connection between the time-dependent boundary condition and the multiplicative functional associated with the elastic drifted Brownian motion. In particular, \eqref{genFBC} is associated with an equivalent multiplicative functional which is written in terms of tempered subordinators and their inverses. This entails a deep relation also  between subordinators and functionals of the Brownian motion.

A family $M=\{M_t\}_{t\geq 0}$ of real-valued random variable is called multiplicative functional (of a given Markov process) provided: $i)$ $M_t$ is progressively measurable; $ii)$ $M_{t+s}=M_t (M_s \circ \theta_t)= M_s (M_t \circ \theta_s)$ a.s. for each $t,s\geq 0$ ($\theta_\alpha$ is the translation operator); $iii)$ $0 \leq M_t \leq 1$ for all $t\geq 0$ (\cite[Chapter III]{BluGet68}). It is well known that  two multiplicative functionals are equivalent if and only if they generate the same semigroup (\cite[Proposition 1.9]{BluGet68}). In particular, the multiplicative functional uniquely characterizes the semigroup (\cite[Theorem 3.3]{BluGet68}). 

Let us consider the drift $\pm \mu$ with $\mu\geq 0$. For the elastic drifted Brownian motion $\widetilde{X}^{\pm\mu} =\{\widetilde{X}^{\pm \mu}_t \}_{t\geq 0}$ on $[0, \infty)$ we can write
\begin{align}
\label{semigIntro}
\mathbf{E}_x[f(\widetilde{X}^{\pm \mu}_t)] = \mathbf{E}_x[f(X^{\pm \mu}_t)\, M^{\pm \mu}_t]
\end{align}
where $X^{\pm\mu} = \{X^{\pm \mu}_t\}_{t\geq 0}$ is a reflecting Brownian motion on $[0, \infty)$ and $M^{\pm\mu}_t$ is the multiplicative functional associated with the elastic condition. Formula \eqref{semigIntro} gives the probabilistic representation of the solution to
\begin{equation}
\label{CP}
\begin{cases}
\displaystyle  \frac{\partial u}{\partial t} = \pm \mu \frac{\partial u}{\partial x} + \frac{\partial^2 u}{\partial x^2}, & x \geq 0,\, t>0\\
\displaystyle u(0, x) = f(x), & x\geq 0
\end{cases}, \quad \mu \geq 0
\end{equation}
with (elastic) boundary condition
\begin{align}
\label{BCIntro}
\frac{\partial u}{\partial x} (t,0) = c\, u(t,0), \quad t>0
\end{align}
where $ c> 0 $.
The problem to find a probabilistic representation for the solution to \eqref{CP} and \eqref{genFBC} can be addressed as in \cite{FBVP2dov} via time change. Non-local boundary value problems can be considered as useful models for motions on trap domains (with irregular boundaries). The solutions to the problems \eqref{CP}-\eqref{BCIntro} ans \eqref{CP}-\eqref{genFBC} obviously differ except for a constant initial datum $f$. Here, we are interested in the equivalence between \eqref{genFBC} and \eqref{BCIntro} for the Cauchy problem \eqref{CP}. Thus, we focus on the lifetime of $\widetilde{X}^{\pm\mu}$ and we provide some connections between the fractional boundary condition and the corresponding process, that is a non negative and non decreasing process which is an inverse to a tempered subordinator with symbol $\Phi$ with tempering parameter $\eta = (\pm \mu/2)^2$.

\subsection{Main results and plan of the work}

First we provide some deep relations between elastic drifted Brownian motion and an inverse to a tempered subordinator. Then, we prove equivalence between the multiplicative functionals $M$ and $\overline{M}$ where the latter is written in terms of an inverse to a tempered subordinator. This permits a very fruitful change between the elastic drifted Brownian motion and a non-decreasing process (the inverse to a subordinator) in studying the problem \eqref{CP} - \eqref{BCIntro}.

In Section \ref{secDerSub} and Section \ref{sec:temp} we introduce the tempered subordinator $H$ and its inverse $L$ together with non-local operators in time. In section \ref{sec:EDBM} we introduce the elastic drifted Brownian motion and the following equivalences in law (Theorem \ref{thm0} and Theorem \ref{thm00})
\begin{align}
\label{deepRelations}
\max_{0\leq s \leq t} X^\mu_s \stackrel{d}{=} L_t, \quad t\geq 0 \quad \textrm{and} \quad \max_{0\leq s \leq t} X^{-\mu}_s \stackrel{d}{=} L_t \wedge T_\mu, \quad t\geq 0 
\end{align}
where $T_\mu$ is an independent exponential random variable. 

In Section \ref{helpfulSection} we discuss an intuitive example in case of zero drift. This corresponds to the case of stable subordinator (indeed $\eta=0$) and therefore, the Caputo derivative is involved. 

In Section \ref{sec:caseI} we confirm the relations discussed above in \eqref{deepRelations} in terms of boundary value problems. Indeed, the process $L_t$ (associated to the problem \eqref{probell}) is related with $X^{\pm \mu}$ (in terms of formulas \eqref{deepRelations}) as well as the condition \eqref{BCIntro} is related with \eqref{genFBC} in the domain $D(G_{\pm \mu})$. In particular, we are concerned with the solution 
\begin{align*}
u(t,x)= \mathbf{E}_x [\overline{M}_t^{\pm \mu}]
\end{align*}
to the problem 
\begin{equation*}
\begin{cases}
\displaystyle  \frac{\partial u}{\partial t} = \pm \mu \frac{\partial u}{\partial x} + \frac{\partial^2 u}{\partial x^2}, & x \geq 0,\, t>0\\
\displaystyle u(0, x) = \mathbf{1}(x\geq 0), & x\geq 0
\end{cases}
,\qquad \mu \geq 0
\end{equation*}
with the boundary condition \eqref{genFBC} where $c_1, c_2$ characterize the interplay between the inverse to a subordinator and the exponential random variable. The non-local boundary condition \eqref{genFBC} takes the following forms:
\begin{itemize}
\item[-] In case of positive drift $\mu$ (Theorem \ref{thmI}, Theorem \ref{thmIbis} and Theorem \ref{last-thmI}),
\begin{align}
\label{FBCIntro}
\mathfrak{D}^{\frac{1}{2},\mu}_t u(t, 0) + (\mu + c)\,  u(t,0) = \mu, \quad t>0;
\end{align}
\item[-] In case of negative drift $-\mu$ (Theorem \ref{thmII} and Theorem \ref{last-thmII}),
\begin{align}
\label{FBCIntroo}
\mathfrak{D}^{\frac{1}{2},\mu}_t u(t, 0) + c\,  u(t,0) = 0, \quad t>0;
\end{align}
\item[-] In case of zero drift $\mu=0$ (as a by-product of the previous theorems),
\begin{align}
\label{FBCIntrooo}
\mathfrak{D}^{\frac{1}{2}}_t u(t, 0) + c\,  u(t,0) = 0, \quad t>0,
\end{align}
\end{itemize}  
where $\mathfrak{D}^{\frac{1}{2},\mu}_t $ denotes the tempered Caputo derivative defined in Section \ref{sec:temp}. First we show that \eqref{BCIntro} is equivalent to \eqref{FBCIntro} and \eqref{FBCIntroo}. That is, $\overline{M}^{\pm\mu}_t$ is equivalent to the multiplicative functional $M^{\pm\mu}_t$ associated with the elastic condition for a drifted Brownian motion. Moreover, we show that $\overline{M}^{\pm \mu}_t$ can be written in terms of inverses to tempered stable subordinators $L_t$ and the exponential r.v. $T_\mu$ for which $\mathbf{P}(T_\mu >x)=e^{-\mu x}$ and $T_0 = \infty$ with probability $1$. We show that  $c_2=\mu$ in \eqref{FBCIntro} introduces $T_\mu$ with $\mu>0$ whereas, $c_2=0$ in \eqref{FBCIntroo} introduces $T_0$. If $\mu=0$, then we obtain the elastic Brownian motion with elastic coefficient $c_0$. The corresponding boundary condition is therefore given by \eqref{FBCIntrooo}. 

Such results highlight the following facts.
\begin{itemize}
\item[i)] Equivalence between boundary conditions: the generator of a drifted Brownian motion appears to be intimately connected with the (time) tempered fractional derivative;
\item[ii)] Equivalence between multiplicative functionals: the elastic drifted Brownian motion and the tempered subordinator are intimately related. In particular, some functionals of $\widetilde{X}^{\pm\mu}$ can be written in terms of $L_t$ with tempering parameter $\eta=(\pm \mu /2)^2$ and $T_\mu$ with $\mu\geq 0$. 
\end{itemize}

\subsection{Motivations and discussion of the results}
Our aim is to underline the connection between the non-local dynamic boundary value problem with the well-known Cauchy problem involving the drifted Brownian motion. The alternative formulation of the problem is therefore given in terms of the conditions \eqref{FBCIntro} - \eqref{FBCIntrooo}.  Such a formulation relies on the fact that the multiplicative functional charactering the semigroup can be also described by an operator in time. Actually, we obtain that time-dependent (or dynamical) boundary conditions characterize uniquely such a class of functionals. The equivalence between the boundary conditions \eqref{genFBC} and \eqref{BCIntro} for the Cauchy problem \eqref{CP} gives a deep connection between drifted Brownian motions and tempered subordinators. Thus, the interesting connections between the processes $X^{\pm \mu}_t$, $L_t$ and $L_t \wedge T_\mu$ turn out to be evidently useful in applications, simulation and numerical methods. Moreover, our non-local dynamic problem can be regarded as the starting model for a very general motion in higher dimensions. Roughly speaking, a possible reading of the dynamic boundary value problem on a domain $\Omega \cup \partial \Omega$ can be given by considering two evolution equations respectively for the bulk $\Omega$ and the surface $\partial \Omega$. Such evolution equations can be associated with a motion on $\Omega$ and a motion on $\partial \Omega$. Thus, non-local dynamic boundary value problems should be related with non-homogeneous surfaces and the motion on such surfaces turns out to be affected by some anomalies. The results in the present work give some key ideas on this direction by dealing with the simplified case $\partial \Omega=\{0\}$. Recent results concerning dynamical boundary value problem with the Caputo-Dzherbashian derivative have been given in \cite{FBVPdov, FBVP2dov} where a further application has been considered. In particular, non-local operators in the boundary conditions introduce new models for motions on irregular domains. The irregularity of the domain is due to the boundary in which the process may spend an infinite (mean) amount of time. The present work has been inspired by \cite{SW1,SW2} where the authors have obtained a beautiful characterization of the sticky Brownian motion in terms of a time-dependent boundary condition. We have been also moved by the fundamental awareness that fractional powers of operators (and therefore non-local operators) are strictly related with their local higher-order counterparts, when they exist (as discussed in \cite{DovSPL} and many other interesting papers). For example, the intuitive representation of $(-\Delta)^{1/2}u$ can be given by $(\partial_t)^{1/2}u$ if $\partial_t u = \Delta u$. For the non-local case we are dealing with an object like $\Phi(\partial_t)$. The case $\Phi(\lambda)= \lambda$ corresponds to the ordinary derivative, in this case the dynamical boundary condition have a clear physical interpretation (see \cite{Gold2006}).

\section{Non-local operators and random times}
\label{secDerSub}

We introduce the processes $L_t$ with $\lambda$-potentials
\begin{align}
\label{propPotL}
\mathbf{E}_0 \left[ \int_0^\infty e^{-\lambda t} \, f(L_t)\, dt \right] = \frac{\Phi(\lambda)}{\lambda} \int_0^\infty  e^{-s\, \Phi(\lambda)}\, f(s)\, ds
\end{align}
where the symbol $\Phi$ is a Bernstein function uniquely characterized by the measure $\Pi$ as follows
\begin{align}
\label{symbGenPhi}
\Phi(\lambda) = \int_0^\infty ( 1 - e^{- s \lambda} ) \, \Pi(ds). 
\end{align}
It is well-known that the process $L_t$ can be regarded as the inverse to a subordinator with symbol $\Phi$, say $H_t$, for which we have
\begin{align}
\label{symbH}
\mathbf{E}[e^{-\lambda H_t}] = e^{-t \Phi(\lambda)}, \quad \lambda>0, \quad t>0
\end{align}
and the relation $\mathbf{P}_0(L_t < s) = \mathbf{P}_0(H_s > t)$ holds true. In this context, the measure $\Pi$ is termed L\'{e}vy measure of $H_t$. Both processes are random times in the sense that they are non-negative and non-decreasing. The subordinator $H_t$ may have jumps, thus the inverse $L_t$ defined as
\begin{align}
\label{relationInverse}
L_t := \inf\{s > 0\,:\, H_s > t\}, \quad t > 0
\end{align}
is a continuous process with non-decreasing paths. We also assume that $H_0=0$ and $L_0=0$. We denote by $\ell$ and $h$ the density of $L_t$ and $H_t$ respectively, that is
\begin{align*}
\mathbf{P}_0(L_t \in ds) = \ell(t,s)\, ds
\quad \textrm{and} \quad
\mathbf{P}_0(H_t \in ds) = h(t,s)\, ds.
\end{align*}
As usual, $\mathbf{P}_x$ denotes the probability measure of the process started at $x$. We notice that, by definition of inverse process, $L_t$ is the first exit time of $H_t$ from the interval $(0,t)$. Since $H_t$  has strictly increasing paths with jumps (we are not including the case $\Pi((0, \infty))<\infty$, the Poisson case for instance) the process $L_t$ has continuous paths with plateaus. This is an interesting aspects introducing the concept of delayed and rushed motions for time-changed processes (\cite{CapDov}). \\

We provide the following result which will be useful further on.

\begin{proposition}
\label{proppotentialL}
Let $\theta>0$ be fixed. Let $\Phi$ be the symbol defined in \eqref{symbGenPhi}. Then, for $x \geq 0$,
\begin{align}
\label{LT1}
\int_0^\infty e^{-\lambda t} \mathbf{E}_0 \left[\frac{1- e^{-\theta (L_t -x)}}{\theta} \, \mathbf{1}_{(L_t\geq x)} \right] dt = \frac{1}{\lambda} \frac{1}{\theta + \Phi(\lambda)} e^{-x\, \Phi(\lambda)}, \quad \lambda >0
\end{align}
and
\begin{align}
\label{LT2}
\int_0^\infty e^{-\lambda t} \mathbf{E}_0[e^{-\theta (L_t -x)} \mathbf{1}_{(L_t \geq x)}]\, dt = \frac{\Phi(\lambda)}{\lambda} \frac{1}{\theta + \Phi(\lambda)} e^{-x\, \Phi(\lambda)}, \quad \lambda > 0
\end{align}
hold true.
Moreover
\begin{equation}\label{eq:ltmu-pot}
	\mathbb E \left[ \int_0^\infty e^{-\lambda t} \, f(L_t \wedge T_\mu)\, dt \right] = \frac{\Phi(\lambda) + \mu}{\lambda} \tilde f(\Phi(\lambda)+\mu)
\end{equation}
where $ \tilde f(\lambda) = \int_{0}^{\infty} e^{-\lambda s} f(s) \,\mathrm d s. $

\end{proposition}
\begin{proof}
 First we notice that
\begin{align*}
\int_0^t h(s,x) dx = \mathbf{P}_0(H_s \leq t) = \mathbf{P}_0(L_t \geq s) = \int_s^\infty \ell(t, x)dx, \qquad t>0, \; s>0.
\end{align*}
Formula \eqref{LT1} can be obtained by considering the Tonelli's theorem and the fact that 
\begin{align*}
\frac{1}{\lambda} \frac{1}{\theta + \Phi(\lambda)} e^{-x\, \Phi(\lambda)} 
= & \frac{1}{\lambda} \int_0^\infty e^{- w (\theta + \Phi(\lambda)) - x \Phi(\lambda)} \, dw\\
= & \left[ \textrm{by \eqref{symbH}} \right]\\
= & \frac{1}{\lambda} \int_0^\infty e^{-w \theta} \mathbf{E}_0[e^{-\lambda H_{w+x}}]\, dw\\ 
= & \int_0^\infty e^{-\lambda t} \int_0^\infty e^{-w \theta}  \left[ \int_0^t h(w+x, s) ds \right]\,  dw\, dt\\
= & \int_0^\infty e^{-\lambda t} \int_0^\infty e^{-w \theta} \,  \mathbf{P}_0(H_{w+x} \leq t)\,  dw\, dt\\
= & \int_0^\infty e^{-\lambda t} \int_0^\infty e^{-w \theta} \,  \mathbf{P}_0(L_t \geq w + x) \, dw\, dt\\
= & \int_0^\infty e^{-\lambda t} \mathbf{E}_0 \left[ \int_0^\infty e^{-w \theta} \,  \mathbf{1}_{(L_t -x \geq w)} \, dw\right] dt\\
= & \int_0^\infty e^{-\lambda t} \mathbf{E}_0 \left[ \left( \int_0^{L_t -x} e^{-w \theta} dw\right) \mathbf{1}_{(L_t\geq x)}\right] dt\\
= & \int_0^\infty e^{-\lambda t} \mathbf{E}_0 \left[\frac{1- e^{-\theta (L^\Phi_{t}-x)}}{\theta} \, \mathbf{1}_{(L_t\geq x)} \right] dt, \quad \lambda>0.
\end{align*}

Formula \eqref{LT2} immediately follows from \eqref{propPotL}.

We write $ \Phi = \Phi(\lambda) $ for short. By applying \eqref{propPotL} we get
\begin{align}
	&\mathbf E \left[ \int_0^\infty e^{-\lambda t} \, f(L_t \wedge T_\mu)  \, dt \right ]
	\\
	&=
	\int_0^\infty e^{-\lambda t} 
	\left[
	\mathbf E\left( f(T_\mu) \mathbf 1(L_t > T_\mu) \right) 
	+ \mathbf E\left( f(L_t) \mathbf 1(L_t \leq T_\mu) \right) 
	\right]
	\notag \\
	&=
	\mathbf E \left[ f(T_\mu) \mathbf E \left[ \int_{0}^{\infty} e^{-\lambda t} \mathbf 1_{[T_\mu, \infty)} (L_t) \mathrm d t \Big| T_\mu \right] \right] + 
	\mathbf E \left[ \mathbf E \left[ \int_{0}^{\infty} e^{-\lambda t} f(L_t) \mathbf 1_{[0, T_\mu]} (L_t) \mathrm d t \Big| T_\mu \right] \right] \notag \\
	&=
	\frac{\Phi}{\lambda} \mathbf E \left[
	f(T_\mu)  \int_0^\infty e^{-\Phi s} \mathbf 1_{[T_\mu, \infty)} (s) \mathrm d s + \int_0^\infty e^{-\Phi s} \mathbf 1_{[0, T_\mu]} (s) f(s) \mathrm d s
	\right]
	\notag \\
	&=
	\frac{\Phi}{\lambda} \mathbf E \left[
	f(T_\mu) \frac{e^{-\Phi T_\mu}}{\Phi} + \int_0^{T_\mu} e^{-\Phi s} f(s) \mathrm d s
	\right]
	\notag \\
	&=
	\frac{1}{\lambda}
	\int_0^\infty \mu e^{-(\mu + \Phi )z}f(z ) \, \mathrm d z + 
	\frac{\Phi}{\lambda}
	\int_0^\infty \mu e^{-\mu z}
	\int_0^{z} e^{-\Phi s} f(s) \mathrm d s
	\notag \\
	&=
	\frac \mu \lambda \tilde f ( \mu + \Phi )  + 
	\frac{\Phi}{\lambda}
	\int_0^\infty e^{-(\mu + \Phi )s}f(s ) \, \mathrm d s 
	\notag \\
	&=
	\frac {\mu + \Phi}\lambda \tilde f ( \mu + \Phi ) .
	\notag 
\end{align}

\end{proof}

The non-local operator associated with $H_t$ is given by (Bochner-Phillips)
\begin{align}
\label{PhiRL}
-\Phi(-\partial_x) \psi(x) := \int_0^\infty \left( \psi(x) - \psi(x-s)  \right) \Pi(ds), \quad x\geq 0.
\end{align}
Indeed, from \eqref{symbGenPhi}, the Laplace transform of the right-hand side of \eqref{PhiRL} gives  
\begin{align*}
\left( \int_0^\infty (1- e^{-\lambda s}) \, \Pi(ds) \right) \widetilde{\psi}(\lambda)= \Phi(\lambda)\, \widetilde{\psi}(\lambda)
\end{align*} 
for a function $\psi$ compactly supported on the positive real line. Thus, in the Laplace analysis, the symbol $\Phi$ turns out to be the multiplier of the operator \eqref{PhiRL}. Formula \eqref{PhiRL} recall the definition of fractional derivative given by Marchaud, thus we may refer to \eqref{PhiRL} as a Marchaud (type) operator (the definition coincides in case of stable subordinator, that is for $\Phi(\lambda)=\lambda^\alpha$). An interesting discussion about the comparison between fractional derivatives has been given in \cite{Ferr}. The Riemann-Liouville (type) operator is therefore written for a general symbol $\Phi$ as
\begin{align*}
\mathcal{D}^{\Phi}_x \psi(x) := \frac{d}{dx} \int_0^x \psi(x-s)\, \overline{\Pi}(s)\,ds
\end{align*}
where $\overline{\Pi}(s) = \Pi((s, \infty))$ is the tail of $\Pi$. We can check that the symbol $\Phi$ still plays the role of multiplier for this operator, that is
\begin{align}
\label{multiplierRL}
\int_0^\infty e^{-\lambda x} \mathcal{D}^{\Phi}_x \psi(x)\, dx = \Phi(\lambda)\, \widetilde{\psi}(\lambda).
\end{align}

We now introduce the time fractional operator we will deal with further on. Let $N>0$ and $n\geq 0$. Let $\mathcal{N}_\omega$ be the set of (piecewise) continuous function on $[0, \infty)$ of exponential order $\omega$ such that $|\psi(t)| \leq N e^{\omega t}$. Denote by $\widetilde{\psi}$ the Laplace transform of $\psi$. Then, we define the operator $\mathfrak{D}^\Phi_t : \mathcal{N}_\omega \mapsto \mathcal{N}_\omega$ as the Caputo (type) operator for which
\begin{align}
\label{symDerPhi}
\int_0^\infty e^{-\lambda t} \mathfrak{D}^{\Phi}_t \psi(t)\, dt = \Phi(\lambda)\, \widetilde{\psi}(\lambda) - \frac{\Phi(\lambda)}{\lambda} \psi(0), \quad \lambda > \omega.
\end{align}
This immediately introduces the definition
\begin{align}
\label{relRL-C-T}
\mathfrak{D}^{\Phi}_t \psi(t) := \mathcal{D}^{\Phi}_t \psi(t) - \Pi((t, \infty)) \psi(0) 
= \mathcal{D}^{\Phi}_t \left( \psi(t) - \psi(0) \right)
\end{align}
where we have used formula \eqref{multiplierRL} and the well-known fact (\cite[Section 1.2]{Bertoin})
\begin{align}
\label{LapTail}
\int_0^\infty e^{-\lambda t} \, \Pi((t, \infty))\, dt = \frac{\Phi(\lambda)}{\lambda}.
\end{align}
The identity  $\mathcal{D}^{\Phi}_t \psi(0)= \Pi((t, \infty)) \psi(0)$ follows from the definition of $\mathcal{D}^{\Phi}_t$. Since $\psi$ is exponentially bounded, the integral $\widetilde{\psi}$ is absolutely convergent for $\lambda>\omega$. Since $\Phi(\lambda) \widetilde{\psi}(\lambda) - \Phi(\lambda)/\lambda \, \psi(0) = \left( \lambda \widetilde{\psi}(\lambda) - \psi(0) \right)\Phi(\lambda)/\lambda $, then $\mathfrak{D}^\Phi_t$ can be written as a convolution involving the ordinary derivative $\psi^\prime$ and the tail $\Pi((t, \infty))$ iff $\psi \in \mathcal{N}_\omega \cap C((0, \infty), \mathbb{R}_+)$ and $\psi^\prime \in \mathcal{N}_\omega$. In particular, 
\begin{align}
\label{CDder}
\mathfrak{D}^{\Phi}_t \psi(t) = \int_0^t \psi^\prime(t-s)\, \overline{\Pi}(s)\, ds.
\end{align}
By Young's inequality for convolution and formula \eqref{LapTail} we have that
\begin{align}
\label{YoungSymb}
\int_0^\infty |\mathfrak{D}^\Phi_t \psi |^p dt \leq \left( \int_0^\infty |\psi^\prime |^p dt \right) \left( \lim_{\lambda \downarrow 0} \frac{\Phi(\lambda)}{\lambda} \right)^p, \qquad p \in [1, \infty)
\end{align}
where 
\begin{align}
\label{ratioLimPhi}
\lim_{\lambda \downarrow 0} \frac{\Phi(\lambda)}{\lambda} = \frac{d \Phi}{d\lambda}(\lambda) \bigg|_{\lambda=0}
\end{align}
is finite only in some cases. The limit \eqref{ratioLimPhi} will be considered again further on and it is related with the mean value of the subordinator $H_t$. Indeed, from \eqref{symbH},
\begin{align*}
\mathbf{E}_0[H_t] = t \, \frac{d \Phi}{d\lambda}(\lambda) \bigg|_{\lambda=0}.
\end{align*}
We notice that when $\Phi(\lambda)=\lambda$ (that is we deal with the ordinary derivative $D_t$) the equality holds true \eqref{YoungSymb} and $H_t = t$, $L_t=t$ almost surely. 
Some further representations of $\mathfrak{D}^\Phi_t$ in terms of the tails of a L\'evy measure $\Pi((t,\infty))$ have been given in the recent works \cite{Chen, Toaldo} and previously in \cite{Koc2011}. \\

Assuming that
\begin{align}
\label{Assumption}
\lim_{\lambda \downarrow 0} \frac{\Phi(\lambda)}{\lambda} < \infty,
\end{align}
formula \eqref{YoungSymb} says that we are looking for $\psi \in C((0, \infty))$ with $\psi^\prime \in L^1((0, \infty))$. Thus, the minimal requirement is that $\psi \in AC((0,\infty))$. As usual, we denote by $AC((0,\infty))$ the set of absolutely continuous functions on $(0, \infty)$. In particular, $\psi \in AC((0, \infty))$ if $\psi \in C((0, \infty))$ and $\psi^{\prime} = \varrho \in L^1((0, \infty))$, that is we can write
\begin{align}
\label{generalizedDer}
\psi(t) = \psi(0) + \int_0^t \varrho(s)ds.
\end{align}

Let us denote by $C_b((0, \infty))$ the set of smooth and bounded functions on $(0, \infty)$. In order to give a clear picture about the operator \eqref{relRL-C-T}, under the assumption \eqref{Assumption}, we now address the problem to find $\rho(t,x)$ such that $\rho\in C^{1,1}((0, \infty), (0,\infty); (0, \infty))$ and $\forall\, x>0$, $\rho(\cdot, x) \in AC((0, \infty))$ solving
\begin{equation}
\label{probell}
\left\lbrace
\begin{array}{ll}
\displaystyle \mathfrak{D}^{\Phi}_t \rho(t,x) = - \frac{\partial \rho}{\partial x}(t,x), \quad t>0,\; x>0,\\
\displaystyle \rho(0,x)= f(x) \in C_b([0, \infty)),\\
\displaystyle \rho(t,0)= 0, \quad t>0.
\end{array}
\right.
\end{equation}
Then, there is a (classical) solution 
\begin{align}
\label{minimalReq}
\rho \in C^{1,1}(AC((0, \infty)), (0, \infty); (0, \infty))
\end{align}
with probabilistic representation
\begin{align*}
\rho(t,x) = \mathbf{E}_0[f(x - L_t) \mathbf{1}_{(t < H_x)}]
\end{align*}
where $L_t$ is an inverse to a subordinator $H_t$ with symbol $\Phi$. We can easily verify such results. Let us denote by $\widehat{\rho}(t,\xi)= \int_0^\infty e^{-\xi x} \rho(t,x)\, dx$ and $\widetilde{\rho}(\lambda, x) = \int_0^\infty e^{-\lambda t} \rho(t,x)\, dt$ the Laplace transforms w.t. to the variables $x$ and $t$ respectively. Let $\widehat{\widetilde{\rho}}(\lambda, \xi)$ be the double Laplace transform. With \eqref{symDerPhi} at hand, from the problem \eqref{probell} we write  
\begin{align*}
\Phi(\lambda) \, \widehat{\widetilde{\rho}}(\lambda, \xi) - \frac{\Phi(\lambda)}{\lambda} \widehat{f}(\xi) = -\xi \widehat{\widetilde{\rho}}(\lambda, \xi) 
\end{align*}
from which
\begin{align*}
\widehat{\widetilde{\rho}}(\lambda, \xi) = \frac{\Phi(\lambda)}{\lambda} \frac{1}{\xi + \Phi(\lambda)} \widehat{f}(\xi), \quad \lambda>0,\; \xi >0. 
\end{align*}
From Proposition \ref{proppotentialL} we get that
\begin{align*}
\rho(t,x) = \int_0^x f(y)\, \ell(t,x-y)\, dy = \mathbf{E}_0[f(x - L_t) \mathbf{1}_{(L_t < x)}].
\end{align*}
The Laplace machinery gives uniqueness.  The probabilistic representation follows by considering \eqref{relationInverse}. As we can see $\forall\, x>0$, $\rho(\cdot, x) \in L^1((0, \infty))$ only under \eqref{Assumption}. This agrees with \eqref{YoungSymb}. If the strong assumption \eqref{Assumption} does not hold, then we have to ask for 
\begin{align*}
\varrho^\prime(t-s) \overline{\Pi}(s) \in L^1((0, t)), \quad \forall \, t>0. 
\end{align*}

Despite the minimal requirement \eqref{minimalReq} we notice that $\ell(\cdot, x) \in C^{\infty}((0, \infty))$ for any $x>0$. It suffices to consider, for a given $x>0$, the function
\begin{align*}
R_n(\lambda) = \lambda^n \int_0^\infty e^{-\lambda t} \ell(t,x)\,dt = \lambda^n \frac{\Phi(\lambda)}{\lambda} e^{-x \Phi(\lambda)}, \quad \lambda>0, \quad n \in \mathbb{N}_0.
\end{align*}
Since $\Phi$ is a Bernstein function with $\Phi(0)=0$, we get that
\begin{align*}
\lim_{\lambda \to 0} R_n(\lambda) = 0, \quad \lim_{\lambda \to \infty} R_n(\lambda) =0, \quad \forall\, n\in \mathbb{N}.
\end{align*}
This also prove that $\ell(\cdot, x) \notin L_1((0, \infty))$ for any $x>0$ except in case \eqref{YoungSymb} is in force.

Furthermore, we only notice that the kernel $\ell$ can be uniquely determined as the solution to the problem
\begin{equation}
\label{probellDirac}
\left\lbrace
\begin{array}{ll}
\displaystyle \mathcal{D}^{\Phi}_t \ell(t,x) = - \frac{\partial \ell}{\partial x}(t,x), \quad t>0,\; x>0,\\
\displaystyle \ell(0,x)= \delta(x) \\
\displaystyle \ell(t,0)= \Pi((t, \infty)),
\end{array}
\right.
\end{equation}
where $\delta$ is the Dirac function and the derivative \eqref{PhiRL} is considered in place of \eqref{relRL-C-T}. The Laplace technique can by applied as before by considering the formula \eqref{LapTail}. The problem \eqref{probellDirac} has been investigated in \cite{Toaldo}. In the literature very often this equations are confused in the sense that, only the first one can be written in terms of the Caputo type derivative. Sometimes the boundary condition is omitted. Below we are interested in a kind of fractional relaxation equation  based on \eqref{probell}.

\section{Tempered fractional calculus}
\label{sec:temp}

From now on we focus on the symbol
\begin{align}
\label{symbGenSub}
\Phi(\lambda) = \sqrt{\lambda+\eta} - \sqrt{\eta}, \quad \lambda \geq 0
\end{align}
corresponding to the L\'{e}vy measure
\begin{align}
\label{levMeas}
\Pi(ds) = \frac{1}{2}\frac{1}{\sqrt{\pi}} \frac{e^{-\eta s}}{s^{\frac{1}{2} + 1}} ds, \quad \eta >0.
\end{align}
We recall that the corresponding subordinator $H_t$ is the \textit{tempered}  (also termed \textit{relativistic}) stable subordinator of order $ \frac 12 $. The measure of a tempered stable processes can be obtained by multiplying the Lévy measure of an $ \alpha $-stable process by a decreasing exponential. The parameter $ \eta > 0 $ controls the level of tempering. The effect is to reduce the intensity of large jumps keeping the structure of small jumps. The resulting process has finite moments of all order and at the same time, it has an infinite amount of (small) jumps in any finite time interval. For these reasons these models are widely studied, see e.g. \cite{CGMY} for applications in mathematical finance  or 
or \cite{MeeSabChe} and references therein for applications to hydrology problems.
Anomalous diffusion with tempered operators were considered in \cite{Cartea-DCN}, while a general theory for tempering stable processes was presented in \cite{Ros}. 

\autoref{fig:sub} compares the sample paths of a stable subordinator and of a tempered stable subordinator,
showing that the presence of the tempering parameter reduces the number of larger jumps.

\begin{figure}
	\centering
	\begin{subfigure}{0.475\textwidth}
		\centering
		\begin{overpic}[width=.8\textwidth]{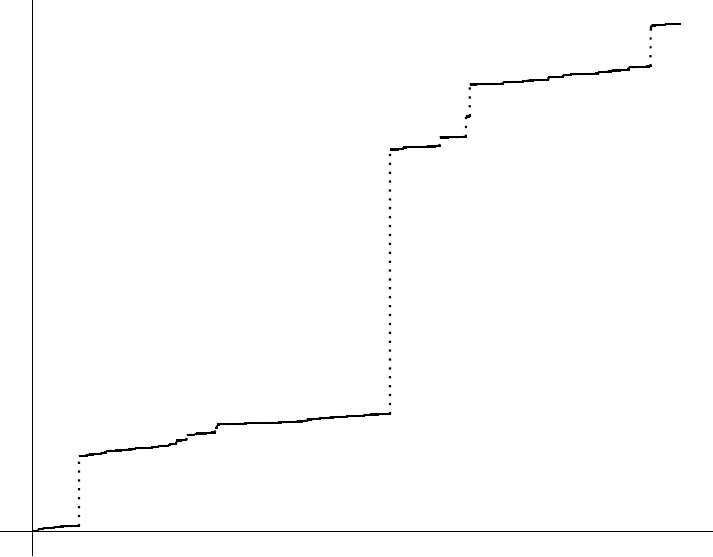}
		\end{overpic}
		\caption{sample path of $ H_t , \eta =0$}
	\end{subfigure}
	\hfill
	\begin{subfigure}{0.475\textwidth}
		\centering
		\begin{overpic}[width=.8\textwidth]{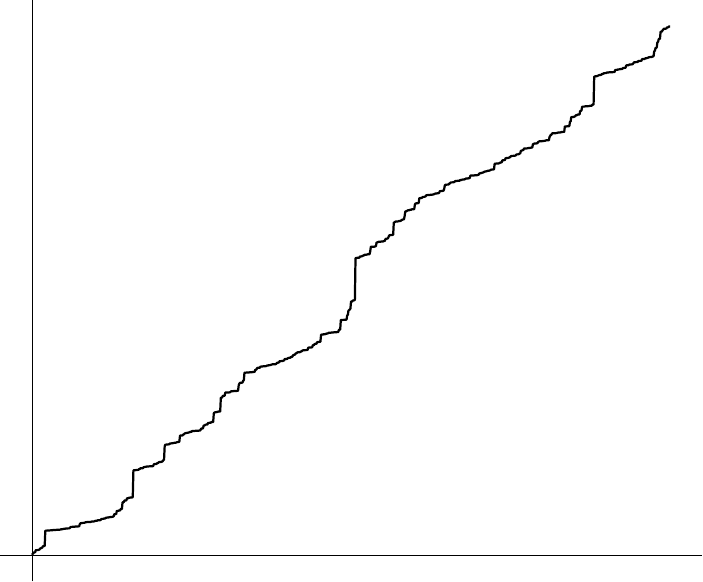}
		\end{overpic}
		\caption{sample path of $ H_t , \eta >0$}
	\end{subfigure}	
	\caption{Comparison of a sample path of a stable subordinator ($ \eta=0 $) and a tempered stable subordinator ($ \eta >0$). Both pictures must be interpreted with jumps in place of vertical lines. The paths are not continuous.}
	\label{fig:sub}
\end{figure}

The Caputo (type) tempered derivative is given by 
\begin{align}
\label{defDerTemp}
\mathfrak{D}^{\frac{1}{2},\eta}_t \psi(t) = \int_0^t \psi^\prime(s) \overline{\Pi}(t-s) ds
\end{align}
where $\overline{\Pi}(z)=\Pi((z, \infty))$ is the tail of the L\'evy measure $\Pi$ given in \eqref{levMeas}. From \eqref{YoungSymb},  we obtain that
\begin{align}
\label{YoungSymbTemp}
\big\| \mathfrak{D}^{\frac{1}{2},\eta}_t \psi \big\|_{L^1} \leq \frac{1}{\sqrt{2\eta}} \, \| \psi^\prime \|_{L^1}
\end{align}
which may be of interest only if $\eta \neq 0$. It is well known that, for $\eta=0$, 
$$\ell(t,x) = 2 e^{-\frac{x^2}{4t}} / \sqrt{4\pi t}, \quad t>0,\, x>0.$$  
The symbol \eqref{symbGenSub} for $\eta=0$ introduces the following derivatives: 
\begin{itemize}
	\item the Riemann-Liouville derivative
	\begin{align*}
	\mathcal{D}^\frac{1}{2}_t \psi(t) = \frac{1}{\sqrt{\pi}} \frac{d}{dt} \int_0^t \frac{\psi(s)}{\sqrt{t-s}} ds 
	\end{align*}
	\item the Caputo-Djrbashian derivative
	\begin{align*}
	\mathfrak D^\frac{1}{2}_t \psi(t) = \frac{1}{\sqrt{\pi}} \int_0^t \frac{\psi^\prime(s)}{\sqrt{t-s}} ds
	\end{align*}
	where $\psi^\prime = d\psi/ds$.
\end{itemize}

We recall the $\lambda$-potential
\begin{align}
\label{propPotL0}
\mathbf{E}_0 \left[ \int_0^\infty e^{-\lambda t} \, f(L_t)\, dt \right] = \frac{\sqrt{\lambda + \eta} - \sqrt{\eta}}{\lambda} \int_0^\infty  e^{-s (\sqrt{\lambda + \eta} - \sqrt{\eta})}\, f(s)\, ds
\end{align}
(which can be obtained as special case of the formula \eqref{propPotL} by considering the L\'{e}vy measure
\eqref{levMeas}) and the following formula
\begin{align}
\label{symbDermu}
\int_0^\infty e^{-\lambda t} \mathfrak D^{\frac{1}{2}, \eta}_t \psi(t) \,dt = (\sqrt{\lambda + \eta} - \sqrt{\eta}) \widetilde{\psi} - \frac{1}{\lambda} (\sqrt{\lambda + \eta} - \sqrt{\eta})\psi_0
\end{align}
which will be useful in the subsequent discussion. The interested reader can consult for example \cite{Beg, DovOrsIaf, MeeSabChe} for further discussions on this operator and tempered processes.

In the following we consider $\eta= \frac{\mu^2}{4}$ as a tempering parameter. Thus in order to streamline the notation as much as possible we write
\begin{align}
\label{DerSakeSimpl}
\mathfrak{D}^{\frac{1}{2},\mu}_t \psi(t) = \mathfrak D^{\frac{1}{2}, \eta}_t \psi(t), \quad \textrm{with }\; \eta= \frac{\mu^2}{4}, \; \mu>0
\end{align}


\begin{proposition}
\label{propSolBCgeneral}
Let $a,b$ be two positive constants. The unique continuous solution on the interval $I \subseteq [0, \infty)$ of the fractional tempered relaxation equation

\begin{equation}
\left\lbrace
\begin{array}{ll}
\displaystyle \mathfrak{D}^{\frac{1}{2}, \mu}_t r(t) + a\, r(t) = b, & t > 0\\
\displaystyle r(0)= c\in \{0,1\} & 
\end{array}
\right.
\end{equation}
is given by
\begin{align*}
r(t) = \frac{b}{a}\, \mathbf{P}_0(L_t \geq T_a) + c\,  \mathbf{P}_0 (L_t < T_a)
\end{align*}
where $T_a$ is an exponential random variable (with parameter $a$) independent from $L_t$ which is the inverse process with symbol \eqref{symbGenSub}.
 
If $c=1$, then $[0, \infty) =I \ni t \mapsto r(t)$ has the following properties:
\begin{itemize}
\item[i)] $b>a \; \Rightarrow \; r(t)>1$; 
\item[ii)] $b<a \; \Rightarrow \; r(t)<1$;
\item[iii)] $b=a \; \Rightarrow \; r(t)=1$.
\end{itemize}

If $c=0$, then $r(t)$ has the following properties:
\begin{itemize}
\item[iv)] $b>a>0$  $\Rightarrow \, \exists \, t_b : r(t) \leq 1 \; \textrm{ if }\; t \in I=[0, t_b)$;
\item[v)] $b\leq a \, \Rightarrow \, r(t) \leq 1, \; t \in I=[0, \infty)$.
\end{itemize}

Moreover, $\forall\, a,b,c$ the solution $t \mapsto r(t)$ is monotone with $r(0)=c$ and $r(t) \to b/a$ as $t \to \infty$.
\end{proposition}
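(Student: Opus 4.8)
The plan is to solve the relaxation equation by Laplace transform in time, identify the resulting transform through Proposition \ref{proppotentialL}, and then read off the qualitative behaviour from the monotonicity of the inverse process $L_t$. First I would take $\widetilde r(\lambda)=\int_0^\infty e^{-\lambda t}r(t)\,dt$ and use \eqref{symbDermu} with $\eta=\mu^2/4$, so that $\Phi(\lambda)=\sqrt{\lambda+\eta}-\sqrt{\eta}$; the equation becomes $\big(a+\Phi(\lambda)\big)\widetilde r(\lambda)=\tfrac{b}{\lambda}+c\,\tfrac{\Phi(\lambda)}{\lambda}$, hence
\[
\widetilde r(\lambda)=\frac{b}{\lambda}\,\frac{1}{a+\Phi(\lambda)}+c\,\frac{\Phi(\lambda)}{\lambda}\,\frac{1}{a+\Phi(\lambda)}.
\]
By Proposition \ref{proppotentialL} with $x=0$ and $\theta=a$, formula \eqref{LT1} shows the first summand is the transform of $\tfrac{b}{a}\,\mathbf{E}_0[1-e^{-aL_t}]$ and \eqref{LT2} shows the second is that of $c\,\mathbf{E}_0[e^{-aL_t}]$; conditioning on $L_t$ and using independence of the exponential $T_a$ gives $\mathbf{E}_0[1-e^{-aL_t}]=\mathbf{P}_0(L_t\ge T_a)$ and $\mathbf{E}_0[e^{-aL_t}]=\mathbf{P}_0(L_t<T_a)$, so injectivity of the Laplace transform yields the claimed representation. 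Uniqueness follows by applying the same transform to the difference of two continuous solutions: it solves the homogeneous equation with null datum, so $(a+\Phi(\lambda))$ times its transform vanishes, and since $a+\Phi(\lambda)\neq0$ the two solutions coincide.

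For the qualitative part, set $p(t):=\mathbf{P}_0(L_t\ge T_a)=\mathbf{E}_0[1-e^{-aL_t}]$, so that $r(t)=\tfrac{b}{a}p(t)+c(1-p(t))=c+(\tfrac{b}{a}-c)p(t)$. All the required facts about $p$ come from the pathwise description in Section \ref{secDerSub}: $L_0=0$, $t\mapsto L_t$ is continuous and non-decreasing with $L_t>0$ for $t>0$, $L_t\uparrow\infty$, and $L_t<\infty$ a.s.; hence, by dominated convergence and monotonicity of $x\mapsto1-e^{-ax}$, the map $p$ is continuous, non-decreasing (indeed strictly increasing, since the tempered stable subordinator $H$ has a strictly positive density on $(0,\infty)$, whence $\mathbf{P}_0(L_s<L_t)>0$ for $s<t$), with $p(0)=0$, $0<p(t)<1$ for $t>0$, and $p(t)\to1$. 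From $r=c+(\tfrac{b}{a}-c)p$ the monotonicity of $r$, the value $r(0)=c$ and the limit $r(t)\to b/a$ are immediate. If $c=1$ then $r(t)=1+\tfrac{b-a}{a}p(t)$, which with $p>0$ on $(0,\infty)$ gives i)--iii) according to the sign of $b-a$. If $c=0$ then $r(t)=\tfrac{b}{a}p(t)$ increases from $0$ to $b/a$, so $b\le a$ gives $r(t)\le b/a\le1$ on $[0,\infty)$ (item v)), while $b>a$ gives $b/a>1$, and by continuity and strict monotonicity there is a unique $t_b$ with $p(t_b)=a/b$, so $r(t)\le1$ exactly for $t\in[0,t_b)$ (item iv)).

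The step I expect to be most delicate is the regularity and uniqueness bookkeeping: one must check that any continuous solution is automatically exponentially bounded (so that \eqref{symbDermu} and Laplace inversion apply) and that $\mathfrak{D}^{\frac{1}{2},\mu}_t r$ is defined pointwise through \eqref{defDerTemp}. Both are recovered a posteriori — boundedness from the monotonicity and finite limit $b/a$ established above, and the local integrability of $r'$ from $r$ being monotone with $\int_0^\infty|r'|=|b/a-c|<\infty$ — but the argument has to be arranged so as to avoid circularity. A secondary point needing a short argument is the strict monotonicity of $p$ (hence the uniqueness of $t_b$ in iv)), which I would obtain from the strict positivity of the density of $H$ on $(0,\infty)$ via $\mathbf{P}_0(H_u\in(s,t))>0$ and the inclusion $\{H_u\in(s,t)\}\subseteq\{L_s\le u<L_t\}$.
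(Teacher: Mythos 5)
Your proposal is correct, and the first half (Laplace transform via \eqref{symbDermu}, identification through Proposition \ref{proppotentialL} with $x=0$, $\theta=a$, and the rewriting $r=c+(\tfrac{b}{a}-c)\,\mathbf{P}_0(L_t\geq T_a)$) is exactly the paper's route to the representation. Where you genuinely diverge is in the qualitative analysis. The paper proves monotonicity by rewriting $r(t)=c+C(a,b,c)\,\mathbf{P}_0(H_{T_a}\leq t)$ and invoking the fact that this is a distribution function, and it handles the case $c=0$, $b>a$ by a rather indirect comparison: it passes to the limits $a\to b$ and $a\to 0$, bounds $\mathbf{E}_0[L_t]$ from below by the mean of the inverse \emph{stable} ($\eta=0$) process via a comparison of symbols, and computes $\mathbf{E}_0[L_t^0]=\sqrt{t}/\Gamma(3/2)$ through the Mittag--Leffler series to conclude that $r$ eventually exceeds $1$. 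Your argument — set $p(t)=\mathbf{E}_0[1-e^{-aL_t}]$, observe that $p$ is continuous, (strictly) increasing from $0$ to $1$ by the pathwise properties of $L$, and read off i)--v), the limit $b/a$, and the crossing time $t_b$ directly from $r=c+(\tfrac{b}{a}-c)p$ — is shorter, more elementary, and yields a sharper version of iv) (a well-defined unique crossing point rather than mere existence of a bounded interval). Your inclusion $\{H_u\in(s,t)\}\subseteq\{L_s\leq u<L_t\}$ for strict monotonicity is sound (right-continuity of $H$ gives the strict inequality $L_t>u$), though for the statement as written plain monotonicity of $p$ already suffices. You are also more explicit than the paper about uniqueness and about the a posteriori verification that a continuous monotone bounded solution lies in the class $\mathcal{N}_\omega$ where $\mathfrak{D}^{\frac{1}{2},\mu}_t$ is defined; the paper leaves both points implicit, so this is a welcome addition rather than a deviation.
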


\begin{proof}
From \eqref{symbDermu}, by Laplace techniques we obtain
\begin{align*}
\widetilde{r}(\lambda) 
= & \int_0^\infty e^{-\lambda t} r(t)\, dt, \quad \lambda \geq 0\\
= & \frac{1}{\lambda} \frac{b + c \sqrt{\lambda+\eta} - \sqrt{\eta}}{a + \sqrt{\lambda+\eta} - \sqrt{\eta}} \\
= & \frac{b}{\lambda} \frac{1}{a + \sqrt{\lambda+\eta} - \sqrt{\eta}} + c \frac{\sqrt{\lambda+\eta} - \sqrt{\eta}}{\lambda} \frac{1}{a + \sqrt{\lambda+\eta} - \sqrt{\eta}}.
\end{align*}
We recall that $\Phi(\lambda)=\sqrt{\lambda+\eta} - \sqrt{\eta}$ is a completely monotone function for which $\Phi(0)=0$ and $\Phi(\lambda)\to \infty$ as $\lambda\to \infty$. We immediately see that $r(t)$ is a non-negative solution. 

From Proposition \ref{proppotentialL} with $\Phi(\lambda)= \sqrt{\lambda+\eta} - \sqrt{\eta}$ we write
\begin{align*}
r(t) 
= & \frac{b}{a}\,  \mathbf{E}_0[1-e^{-a L_t}] + c\, \mathbf{E}_0[e^{-aL_t}]\\
= & \frac{b}{a} \, \mathbf{P}_0(L_t \geq T_a) + c\, \mathbf{P}_0 (L_t < T_a) \\
= & c + \left( \frac{b}{a} - c \right) \mathbf{P}_0(L_t \geq T_a) .
\end{align*}

Let us consider $c=1$. For $a=b$, if follows that $r(t)=\mathbf{P}_0(T_a \in [0, \infty))=1$ $\forall\, t> 0$. Moreover, there exist $\varepsilon_1=\varepsilon_1(t) \geq 0$ and $\varepsilon_2=\varepsilon_2(t) \in [0,1)$ such that, $\forall\, t>0$
\begin{equation}
r(t) = \left\lbrace
\begin{array}{ll}
\displaystyle 1 + \varepsilon_1, & \textrm{if}\; b>a\\
\displaystyle 1 - \varepsilon_2, & \textrm{if}\; b<a
\end{array}
\right . .
\end{equation}
Since $\mathbf{P}_0(0=L_0 \leq T_a)=0$ we recover the initial condition $r(0)=1$.

Now we focus on $c=0$. 
\begin{itemize}
\item[($b<a$):] Since $r(t)=(b/a) \mathbf{P}_0(L_t \geq T_a)$, $r(t) \leq 1$ follows immediately for $b < a$. Indeed,
\begin{align*}
\widetilde{r}(\lambda) \leq \frac{1}{\lambda} \frac{a}{a + \sqrt{\lambda+\eta} - \sqrt{\eta}} \leq \frac{1}{\lambda};
\end{align*}
\item[($b=a$):] If $b=a$ and $a\to 0$, then $r(t)\to 0$ $\forall\, t \geq 0$. If $b=a$ and $a\to \infty$, then $r(t)\to 1$ $\forall\, t>0$. We simply have $\widetilde{r}(\lambda) \leq 1/\lambda$ from which $r(t)\leq 1$ for any $t$ with $b=a \geq 0$;
\item[($b>a$):] Let $b< \infty$. We have that, $r(t) \to 1- \mathbf{E}_0[e^{-b L_t}] \leq 1$ uniformly in $[0, \infty)$ as $a\to b$. The crucial point is given by the fact that $r(t) \to b \mathbf{E}_0[L_t]$ pointwise in $[0, \infty)$ as $a\to 0$ (recall that $L_0=0$). In this case, $r(t)\leq 1$ iff $\mathbf{E}_0[L_t] \leq 1/b$ with $b>0$. Let us denote by $L_t^0$ the process $L_t$ with $\eta=0$. Since $\lambda^\beta$ is $\alpha$-H\"older continuous on $[0, \infty)$ only for $\beta=\alpha$ we obtain that $\sqrt{\lambda+\eta}-\sqrt{\lambda} \leq \sqrt{\eta}$ which implies $\sqrt{\lambda+\eta} - \sqrt{\eta} \leq \sqrt{\lambda}$. The comparison between symbols and the fact that
\begin{align*}
\int_0^\infty e^{-\lambda t} \mathbf{E}_0[L_t]\, dt = \frac{1}{\lambda} \frac{1}{(\sqrt{\lambda + \eta} - \sqrt{\eta})} \geq \frac{1}{\lambda} \frac{1}{\sqrt{\lambda}} = \int_0^\infty e^{-\lambda t} \mathbf{E}_0[L_t^0]\, dt
\end{align*} 
says that $\mathbf{E}_0[L_t] \geq \mathbf{E}_0[L_t^0]$, $t\geq 0$. From the fact that
\begin{align*}
\mathbf{E}_0[e^{-L_t^0}] = \sum_k \frac{(-1)^k}{k!} \mathbf{E}_0[(L_t^0)^k] \quad \textrm{equals} \quad \sum_k \frac{(-\sqrt{t})^k}{\Gamma(k/2 + 1)} = E_\frac{1}{2}(-\sqrt{t})
\end{align*}
we get the well-known result
\begin{align*}
\mathbf{E}_0[L_t^0] = \frac{\sqrt{t}}{\Gamma(1/2+1)}
\end{align*}
which implies $\mathbf{E}_0[L_t^0] \geq 1$ for $t\geq \pi/4$. Recall tht $L_0^0= L_0 = 0$. Thus, $r(t)\leq 1$ only in some bounded domain $[0, t_b) \subset [0, \infty)$. 
\end{itemize}
The monotonicity of $r(t)$ follows by considering that
\begin{align*}
r(t) = c + C(a,b,c)\, \mathbf{P}_0(T_a \leq L_t) = c + C(a,b,c)\, \mathbf{P}_0(H_{T_a} \leq t)
\end{align*}
where we have used the relation \eqref{relationInverse}. Since  $\mathbf{P}_0(H_{T_a} \leq t)$ is a cumulative distribution function, the result follows.
\end{proof}

Recently fractional relaxation equations has been considered in \cite{BegGaj}. The authors have obtained similar results for $b=0$ and $\mu <1$ involving the gamma random variable $\mathfrak{G}$ with density $\mathbf{P}(\mathfrak{G} \in ds)= s^{\mu -1} / \Gamma(\mu)\, e^{-s} ds$, that is $r(t) = \mathbf{P}_0(\mathfrak{G} > a^{1/\mu}\, t)$. Interesting discussions have been made in the papers \cite{Baz2018, Koc2011} and the pioneering work \cite{Mainardi}. In \cite{Baz2018, Koc2011} the properties of the solutions to fractional relaxation equations in terms of complete monotone functions have been investigated.

\begin{remark}
For $c=1$ the solution $r(t)$ is monotone increasing or decreasing depending on the ratio $b/a$, that is respectively $b/a>1$ or $b/a<1$. For $c=0$ the solution $r(t)$ is only increasing.
\end{remark}

\begin{remark}
The initial datum $c\in \{0, 1\}$ will be related with the fact that, for a given Markov process and the corresponding multiplicative functional $M_t$ we have $M_0 \in \{0, 1\}$. Indeed, form the relation $M_{t+s}=M_t(M_s\circ M_t)$ we obtain $M_0=M_0^2$ which implies that almost surely $M_0$ is either $0$ or $1$.
\end{remark}

\section{Elastic drifted Brownian motions}
\label{sec:EDBM}

We introduce and study here the elastic drifted Brownian motion, for short we often write EDBM. We also write RBM meaning a reflecting Brownian motion.
Let us consider the process $\widetilde{X}^\mu= \{\widetilde{X}^\mu_t\}_{t\geq 0}$ on $[0, \infty)$ with generator  $(G_\mu, D(G_\mu))$ where
\begin{align*}
	G_\mu \varphi = \mu \frac{d \varphi}{dx} + \frac{d^2 \varphi}{d x^2}
\end{align*}
and
\begin{align*}
	D(G_\mu) = \left\lbrace \varphi, G_\mu \varphi \in C_b((0, \infty))\,:\, \varphi^\prime (0^+) = c \, \varphi(0^+) \right\rbrace.
\end{align*}
The constant $c>0$ is termed \emph{elastic coefficient}. 
%
%
The transition density of an elastic Brownian motion with drift is 
given by
\begin{align}
	\label{density:pmu}
	&p(t,x,y) 
	\\
	&=  e^{-\frac{\mu^2}{4}t} e^{\frac{\mu}{2}(y-x)}
	\, 
	\left[ g(t, x-y) + g(t, x+y) - 2 \left(c + \frac \mu 2 \right) \int_0^\infty e^{\left(c + \frac \mu 2 \right) \,  w} g(t, w+x+y) dw
	\right] \notag 
\end{align}
for $ x\geq 0,\; y>0, t>0 $, where $g(t,z)=e^{-z^2/4t}/\sqrt{4\pi t}$ and $c \geq 0$. See the Appendix for some hints 
on the derivation of \eqref{density:pmu}.
%
%
In \cite{io-sojourn} the authors highlight an interesting connection between the law of drifted elastic Brownian motions \eqref{density:pmu} and conditional sojourn times of a Brownian motion on the positive half-axis.  
The solution to the Cauchy problem
\begin{align*}
	\partial_t u = G_\mu u, \quad u_0=f \in D(G_\mu)
\end{align*}
is written as
\begin{align*}
	u(t,x)= \int_0^\infty f(y) p(t,x,y)dy=\mathbf{E}_x[f(\widetilde{X}^\mu_t)]
\end{align*}
and the semigroup generated by $(G_\mu, D(G_\mu))$ has the probabilistic representation
\begin{align}
	\label{solmuSemig}
	P^\mu_t f(x) = \mathbf{E}_x[f(\hat X^\mu_t) M_t^\mu] = \mathbf{E}_x[f(\widetilde{X}^\mu_t)]
\end{align}
where $\hat {X}^\mu_t$ is a drifted Brownian motion on $[0, \infty)$ reflected at $0$ and $M^\mu_t$ is the multiplicative functional associated with the Robin boundary condition. 
Let 
\[
G_\lambda(x, y) = \int_{0}^{\infty} e^{-\lambda t} p(t, x, y) \, \mathrm dt, 
\qquad x, y > 0	
\]
be the Green function and
\[
R_\lambda f(x) = \int_{0}^{\infty} e^{-\lambda t} P^\mu_t f(x) \, \mathrm d t
= \int_{0}^{\infty} G_\lambda(x, y) f(y) \,\mathrm d y
\]
be the resolvent associated to the EDBM. Detailed expressions are provided in the Appendix.

\begin{remark}
	\label{remarkLife}
	We recall some basic facts which will be useful in the forthcoming discussion.
	
	Let $G_0=\Delta$ be the infinitesimal generator for some Brownian motion on $E$. The probabilistic representation of the solution to
	\begin{align*}
		& \frac{\partial w}{\partial t} = Gw, \quad w_0 = \mathbf{1}
	\end{align*}
	with some boundary conditions can be written as $w(t,x)= \mathbf{E}_x[M_t] = \mathbf{E}_x[e^{-A_t}]$ that is, in terms of the multiplicative functional $M_t$ or equivalently in terms of the corresponding additive functional $A_t$. For the Robin boundary condition $(\partial_{\bf n} w + c \,w)|_{\partial E}=0$, we have that $M_t= \mathbf{1}_{(t < \zeta)}$ where $\zeta$ is the lifetime of the process with generator $(G, D(G))$. The additive functional to be considered is the local time $\gamma_t$. In particular,
	\begin{align}
		\label{rmkLife1}
		w(t,x) =  \mathbf{E}_x [e^{-c \gamma_t}] = \int_0^\infty e^{-cw} \mathbf{P}_x(\gamma_t \in dw) = 1 - \int_0^\infty ( 1 - e^{-cw} ) \,\mathbf{P}_x(\gamma_t \in dw)
	\end{align}
	or equivalently
	\begin{align}
		\label{rmkLife2}
		w(t, x) =  \mathbf{E}_x [\mathbf{1}_{(t<\zeta)}] = \int_t^\infty \mathbf{P}_x(\zeta \in ds)= 1- \int_0^t \mathbf{P}_x(\zeta \in ds).
	\end{align} 
	
	It is well known that $\mathbf{P}_x(\zeta > t) = \mathbf{P}_x (T_c > \gamma_t)$ where $T_c$ is an exponential random variable with parameter $c>0$ independent from the local time $\gamma_t$ on $\partial E$. The connection between \eqref{rmkLife1} and \eqref{rmkLife2} immediately emerges.
	
	It is well-known that $\gamma_t$ equals in law the running maximum of a Brownian motion started at $x=0$. Moreover, such an equivalence in distribution is maintained with the inverse to an $1/2$-stable subordinator. Notice that, such an inverse process corresponds to $L_t$ with $\eta=(\mu/2)^2=0$.
\end{remark}

Our first results are concerned with the relation between the the inverse to a tempered stable subordinator and  the drifted (reflecting) Brownian motion together with its maximum and its local time. These relations will be useful in the following in connection with the multiplicative functional associated to the EDBM. Results will differ 
if the underlying Brownian motion has a positive or negative drift. We study the two cases separately.\\

\textbf{Remark on the notation.} \emph{For the reader's convenience, in the following discussion, we only allow $ \mu > 0 $, so that a positive drift will be denoted by $ \mu $ and a
negative drift by $ -\mu $.}

\subsection{Brownian motion with positive drift, RBM with negative drift}

In this section we study the case where the Brownian motion $ X^\mu $ has positive drift $ \mu >0 $.

\begin{theorem}
\label{thm0}
For the positively drifted Brownian motion $X^\mu$ with $X^\mu_0=0$, we have that
\begin{align}
\label{eq:equal-xmu-inv}
\max_{0\leq s \leq t} X^\mu_s \stackrel{d}{=} L_t, \quad t>0
\end{align}
where $L$ is an inverse to a relativistic stable subordinator with symbol \eqref{symbGenSub}, $\eta=(\mu/2)^2$.
\end{theorem}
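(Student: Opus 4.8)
The plan is to realize $L_t$ directly as the running maximum by identifying the \emph{first passage times} of $X^\mu$ with the tempered stable subordinator $H$ whose inverse is $L$. First I would introduce, for $x\geq 0$, the first passage time $H_x := \inf\{t\geq 0:\, X^\mu_t = x\}$. Since the drift $\mu$ is strictly positive, $X^\mu_t\to +\infty$ almost surely, so every level is attained and $H_x<\infty$ a.s.; moreover $H_0=0$ because $X^\mu_0=0$. Because $X^\mu$ has continuous paths and is spatially homogeneous (constant drift and diffusion coefficients), applying the strong Markov property at the times $H_x$ shows that $x\mapsto H_x$ has independent and stationary increments and non-decreasing, right-continuous paths; hence $\{H_x\}_{x\geq 0}$ is a subordinator in the sense of \eqref{symbH}.

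Next I would compute its Laplace exponent. For $\lambda>0$ let $\theta=\theta(\lambda)>0$ be the positive root of $\theta^2+\mu\theta-\lambda=0$, that is $\theta=\sqrt{\lambda+\mu^2/4}-\mu/2=\sqrt{\lambda+\eta}-\sqrt{\eta}$ with $\eta=\mu^2/4$. Then $G_\mu e^{\theta x}=\lambda e^{\theta x}$, so $e^{-\lambda t}e^{\theta X^\mu_t}$ is a martingale. Stopping at $H_x$, the stopped martingale is bounded by $e^{\theta x}$ (since $\theta>0$ and $X^\mu_{t\wedge H_x}\leq x$) while $e^{-\lambda(t\wedge H_x)}\leq 1$, so optional stopping together with bounded convergence as $t\to\infty$ (using $H_x<\infty$ a.s.) gives $\mathbf{E}_0\big[e^{-\lambda H_x}\,e^{\theta x}\big]=1$, i.e. $\mathbf{E}_0[e^{-\lambda H_x}]=e^{-x(\sqrt{\lambda+\eta}-\sqrt{\eta})}=e^{-x\Phi(\lambda)}$. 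Comparing with \eqref{symbH} and \eqref{symbGenSub}, $\{H_x\}$ is exactly the relativistic stable subordinator of order $1/2$ with tempering parameter $\eta=(\mu/2)^2$.

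Finally, since $X^\mu$ has continuous paths, $t\mapsto\max_{0\leq s\leq t}X^\mu_s$ is continuous and non-decreasing, and for every $x>0$, $t>0$ one has the elementary duality $\{\max_{0\leq s\leq t}X^\mu_s<x\}=\{H_x>t\}$. Combining this with the defining relation $\mathbf{P}_0(L_t<x)=\mathbf{P}_0(H_x>t)$ coming from \eqref{relationInverse} yields $\mathbf{P}_0(\max_{0\leq s\leq t}X^\mu_s<x)=\mathbf{P}_0(L_t<x)$ for all $x\geq 0$, which is the asserted equality in distribution. The one genuinely delicate point is the justification that $\{H_x\}_{x\geq 0}$ is a subordinator (stationary, independent increments), which rests on the strong Markov property plus the spatial homogeneity of $X^\mu$; everything else is a short computation. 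Alternatively, one can avoid the process-level statement entirely and simply verify, via Proposition \ref{proppotentialL} or by a direct first-passage computation, that the time-Laplace transform of $\mathbf{E}_0[f(\max_{0\leq s\leq t}X^\mu_s)]$ coincides with the $\lambda$-potential \eqref{propPotL0} of $L_t$.
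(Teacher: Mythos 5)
Your proof is correct, but it takes a genuinely different route from the paper. The paper works entirely at the level of explicit distributions: it quotes the known law of $\max_{0\leq s\leq t}X^\mu_s$ for a drifted Brownian motion (from Shepp and Cs\'aki--F\"oldes--Salminen, via \cite{Iafrate19}), computes its time-Laplace transform to get $\lambda^{-1}e^{-\beta(\sqrt{\lambda+\mu^2/4}-\mu/2)}$, and matches this against the $\theta\to 0$ limit of \eqref{LT2} for the inverse tempered subordinator. You instead identify the first-passage process $x\mapsto H_x$ of $X^\mu$ as a subordinator (strong Markov property plus spatial homogeneity), compute its Laplace exponent by optional stopping of the exponential martingale $e^{-\lambda t+\theta X^\mu_t}$ with $\theta^2+\mu\theta=\lambda$, recognize the symbol $\sqrt{\lambda+\eta}-\sqrt{\eta}$, and then transfer the statement through the elementary duality $\{\max_{0\leq s\leq t}X^\mu_s<x\}=\{H_x>t\}$ combined with $\mathbf{P}_0(L_t<x)=\mathbf{P}_0(H_x>t)$. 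Your computation checks out (the quadratic is the right one for the generator $\mu\partial_x+\partial_{xx}$, the stopped martingale is bounded by $e^{\theta x}$, and $H_x<\infty$ a.s.\ precisely because $\mu>0$), and the argument is more structural: it explains \emph{why} the identity holds, namely that the first-passage process of a positively drifted Brownian motion \emph{is} (in law) the relativistic $1/2$-stable subordinator, and it yields in principle the stronger process-level identity. What the paper's computation buys, on the other hand, is the explicit formula \eqref{eq:lap-max-bmu}, which is reused verbatim in the proof of Theorem \ref{thm00}; note that your martingale route would not extend as directly to the negative-drift case, where $H_x$ is defective ($\mathbf{P}_0(H_x=\infty)>0$) --- which is exactly the source of the truncation by $T_\mu$ there. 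Your closing remark that one could alternatively verify the potentials directly via Proposition \ref{proppotentialL} is essentially the paper's proof.
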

\begin{proof}
Formula \eqref{eq:equal-xmu-inv} can be shown by a Laplace transform argument. The distribution of the maximum of a Brownian motion with drift $ \mu $ is well-known. To the best or our knowledge, the law of the maximum has been obtained in \cite{Shep79, CFS87} together with the joint law with its location. For our purposes we refer to \cite{Iafrate19} (with some adaptation) and write
\begin{align}\label{eq:bm-drift}
&\mathbf P_x\left(
\max_{0 \leq s \leq t} X^{\mu}_s > \beta
\right) 
= 
\int_{\beta}^{\infty} 
\frac{e^{- \frac{(z-x)^2}{4t}}}{\sqrt{4 \pi t}} e^{- \frac{\mu^2t}{4} - \frac \mu  2 x} 
\left[
e^{\frac \mu 2 z} + e^{\frac \mu 2 (2 \beta - z)} 
\right] \mathrm dz \,\,,\qquad  \beta > x.
\end{align}
A direct computation immediately shows that the Laplace transform of \eqref{eq:bm-drift} is 

\begin{align}\label{eq:lap-max-bmu}
	\int_{0}^{\infty} 
	e^{-\lambda t} \mathbf P_0\left(
	\max_{0 \leq s \leq t} X^{\mu}_s > \beta
	\right) dt
	&=
	\int_{\beta}^{\infty}	
	\left[
	e^{\frac \mu 2 z} + e^{\frac \mu 2 (2 \beta - z)} 
	\right] \int_{0}^\infty e^{-\left( \frac{\mu^2}{4} + \lambda \right)t}
	\frac{e^{- \frac{z^2}{4t}}}{\sqrt{4 \pi t}} \, \mathrm d t \, \mathrm d z 
	\\
	&=
	\int_{\beta}^{\infty}	
	\left[
	e^{\frac \mu 2 z} + e^{\frac \mu 2 (2 \beta - z)} 
	\right]
	\frac{e^{-z \sqrt{\lambda + \frac{\mu^2} 4}}}{2\sqrt{\lambda + \frac{\mu^2} 4}}
	\mathrm d z 
	\notag \\
	&=
	\frac{e^{-\beta \left(\sqrt{\lambda + \frac{\mu^2} 4} - \frac \mu 2\right )  }}{2\sqrt{\lambda + \frac{\mu^2} 4}}
	\left[
	\frac{1}{\sqrt{\lambda + \frac{\mu^2} 4} + \frac \mu 2} +
	\frac{1}{\sqrt{\lambda + \frac{\mu^2} 4} - \frac \mu 2} 
	\right] 
	\notag \\
	&=
	\frac 1 \lambda e^{-\beta \left(\sqrt{\lambda + \frac{\mu^2} 4} - \frac \mu 2 \right)  }
	\notag 
\end{align}
where we used the well-known formula \eqref{eq:lap-gauss} recalled in the Appendix.

%

On the other hand, by letting $ \theta \to 0  $ in \eqref{LT2}, we immediately see that for the inverse tempered subordinator 
with symbol \eqref{symbGenSub}, $ \eta = \frac{\mu^2}{4} $, it holds that 

\begin{equation}\label{eq:lap-inv-temp}
\int_{0}^{\infty} e^{-\lambda t} \mathbf P_0(L_t > \beta) \,\mathrm d t
=
\frac 1 \lambda e^{-\beta \left(\sqrt{\lambda + \frac{\mu^2}{4}} - \frac{\mu}{2} \right)  }
\end{equation}
thus proving the equality in distribution \eqref{eq:equal-xmu-inv}.

\end{proof}

Moreover we point out a further interesting connection between the tempered subordinator and the local time of the drifted Brownian motion. First we introduce the process $\{ Y^{\theta, \sigma}_t\}_{t \geq 0}$ as the unique strong solution to 
\begin{equation}\label{eq:sde-pre-reflecting}
\mathrm d Y^{\theta, \sigma}_t = -\theta \, \mathrm{sgn} Y^{\theta, \sigma}_t + \sigma\mathrm dB_t\,, \qquad Y^{\theta, \sigma}_0 = 0
\end{equation}
 where $B_t$ is a standard Brownian motion, $\theta \in \mathbb{R}$ and $ \sigma >0 $. In the following we will restrict ourselves to the cases $ \theta = \pm \mu/2,\, \sigma = \sqrt 2 $  and for brevity we define $ Y^\mu_t \coloneqq Y^{\mu/ 2, \sqrt 2}_t, t\geq 0$.
Denote with $ \{ \gamma_t(Y^\mu) \}_{t \geq 0} $ the local time process of $ Y^\mu = \{Y^\mu_t\}_{t \geq 0}$. Analogously we define $ Y^{-\mu}_t \coloneqq Y^{-\mu/ 2, \sqrt 2}_t, t\geq 0$ and  $ \{ \gamma_t(Y^{-\mu}) \}_{t \geq 0} $ as the corresponding local time.

\begin{corollary}
\label{cor:inverse-local-time}
For the local time (at zero) of $Y^\mu$ we have that
\begin{align}
\label{relationLgammaY}
\gamma_t(Y^\mu) \stackrel{d}{=} L_t, \quad t>0.
\end{align}
\end{corollary}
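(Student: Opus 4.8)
The plan is to reduce the claimed identity $\gamma_t(Y^\mu)\stackrel{d}{=}L_t$ to Theorem~\ref{thm0}, which already gives $\max_{0\leq s\leq t}X^\mu_s\stackrel{d}{=}L_t$. The bridge between the two is the classical Lévy-type identity relating the local time at zero of a reflecting diffusion to the running maximum of the associated diffusion. Concretely, I would first recall that $|Y^\mu|$, where $Y^\mu$ solves \eqref{eq:sde-pre-reflecting} with $\theta=\mu/2$, $\sigma=\sqrt2$, is a reflecting Brownian motion on $[0,\infty)$ with drift pointing toward the origin; by the Itô--Tanaka formula applied to $\varphi(y)=|y|$ one has $\mathrm d|Y^\mu_t| = \mathrm{sgn}(Y^\mu_t)\,\mathrm dY^\mu_t + \mathrm dL^0_t(Y^\mu) = -\tfrac{\mu}{2}\,\mathrm dt + \sqrt2\,\mathrm d\widehat B_t + \mathrm dL^0_t(Y^\mu)$, where $\widehat B$ is a Brownian motion and $L^0(Y^\mu)$ is the semimartingale local time at $0$; up to the usual normalization this $L^0(Y^\mu)$ is the $\gamma_t(Y^\mu)$ in the statement. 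Hence $|Y^\mu|$ is exactly the drifted reflecting Brownian motion $X^{-\mu}$ appearing elsewhere in the paper, but the local time keeps track of a \emph{reflected} process, so I must be careful about which sign of drift is relevant.

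The key step is then the Skorohod-type representation: for the reflecting SDE $\mathrm dZ_t = \beta(Z_t)\,\mathrm dt + \sqrt2\,\mathrm d\widehat B_t + \mathrm d\gamma_t$, $Z_t\geq 0$, with $\gamma$ increasing only on $\{Z=0\}$, one has the pathwise identity $\gamma_t = \big(\!-\!\inf_{0\leq s\leq t}\int_0^s(\beta(Z_r)\,\mathrm dr+\sqrt2\,\mathrm d\widehat B_r)\big)^+$ in the driftless case, and more usefully the distributional identity $\gamma_t(Z)\stackrel{d}{=}\max_{0\leq s\leq t}\widetilde Z_s$ where $\widetilde Z$ is the \emph{free} (non-reflected) diffusion with the opposite drift. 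I would make this precise by comparing Laplace transforms rather than by a pathwise argument: the process $t\mapsto\gamma_t(Y^\mu)$ is non-decreasing, continuous, started at $0$, and is itself an inverse of a subordinator (the inverse local time $\tau_s = \inf\{t:\gamma_t(Y^\mu)>s\}$ is a subordinator by the strong Markov property at the successive visits to $0$). Its Lévy exponent is computed from the resolvent of $Y^\mu$ killed upon hitting $0$ / the speed and scale functions of $|Y^\mu|$; a direct computation with the scale function $s(x)=\frac{2}{\mu}(e^{\mu x/2}-1)$ (or its analogue) of drifted reflecting Brownian motion yields exactly $\Phi(\lambda)=\sqrt{\lambda+\mu^2/4}-\mu/2$, i.e. the symbol \eqref{symbGenSub} with $\eta=(\mu/2)^2$. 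Matching this with \eqref{symbH} identifies the subordinator of which $\gamma_t(Y^\mu)$ is the inverse with the tempered stable subordinator $H$, and \eqref{eq:lap-inv-temp} then gives \eqref{relationLgammaY}.

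Alternatively — and this is probably the cleanest route for the paper — I would invoke Theorem~\ref{thm0} directly together with the well-known fact, recalled in Remark~\ref{remarkLife}, that the local time at $0$ of a reflecting (drifted) Brownian motion equals in law the running maximum of the corresponding free drifted Brownian motion; combined with \eqref{eq:equal-xmu-inv} this yields \eqref{relationLgammaY} in one line. The main obstacle is bookkeeping of constants and signs: one must check that the normalization of local time implicit in \eqref{eq:sde-pre-reflecting} (semimartingale local time, with the factor $\sigma=\sqrt2$) matches the normalization under which Theorem~\ref{thm0}'s running maximum has Laplace transform \eqref{eq:lap-max-bmu}, and that the drift $\theta=\mu/2$ in $Y^\mu$ produces tempering parameter $\eta=(\mu/2)^2$ rather than $\mu^2$ or $(\mu/2)^2$ with the wrong sign; a reflection/time-reversal argument (or simply recomputing the Laplace transform of $\mathbf P_0(\gamma_t(Y^\mu)>\beta)$ from the known joint density of drifted reflecting Brownian motion and its local time) settles this. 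Once the normalization is pinned down, the identity is immediate from \eqref{eq:equal-xmu-inv} and \eqref{eq:lap-inv-temp}.
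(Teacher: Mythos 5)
Your proposal is correct and, in its second (``cleanest'') form, is exactly the paper's proof: the paper combines Theorem \ref{thm0} with the Graversen--Shiryaev extension of L\'evy's identity, $\left(\max_{0\leq s\leq t}X^\mu_s - X^\mu_t,\ \max_{0\leq s\leq t}X^\mu_s\right)\stackrel{d}{=}\left(|Y^\mu_t|,\gamma_t(Y^\mu)\right)$, which is precisely the drifted L\'evy-type identity you identify as the bridge (note it is this cited theorem, not the driftless statement of Remark \ref{remarkLife}, that supplies it, and it also settles the normalization bookkeeping you flag).
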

\begin{proof}
In \cite[Theorem 1]{GravShir} the authors prove the equality in distribution
\begin{align}
\label{resultGravShir}
\left(\max_{0\leq s \leq t} X^\mu_s - X^\mu_t, \max_{0\leq s \leq t} X^\mu_s \right) \stackrel{d}{=} \big(|Y^\mu_t|, \gamma_t({Y^\mu)}\big)
\end{align}

The result follows from \eqref{eq:equal-xmu-inv} and \eqref{resultGravShir}.
\end{proof}

In \cite{GravShir} the authors show that $ |Y^\mu| $ 
constitutes a reflecting Brownian motion with drift $ -\mu $.
For $\mu=0$, the relation \eqref{relationLgammaY} agrees with the well-known equality in distribution between maximum, local time and inverse to a $1/2$-stable subordinator as described in Remark \ref{remarkLife}. However, when the presence of the drift is assumed, a fundamental difference emerges. For $\mu > 0$, that is for $\eta>0$, the inverse tempered subordinator is related to the local time of the process $ Y^\mu $ instead of the local time of a Brownian motion with drift.

\subsection{Brownian motion with negative drift, RBM with positive drift}
We now consider the case where the underlying Brownian motion $X^{-\mu}$ has negative drift $ -\mu < 0 $.

The result of Corollary \autoref{cor:inverse-local-time} relates the distribution of the inverse of a tempered subordinator with the distribution of $ Y^\mu $, which is in turn related to a reflecting Brownian motion (RBM for short) with \emph{negative} drift. If one starts with a RBM with positive drift, i.e. by considering the process $ Y^{-\mu} = \{Y^{-\mu}_t\}_{t\geq 0}$ and its absolute value, the symmetry appears to break. 
In fact, while the equality in distribution \eqref{resultGravShir} still holds, relating the RBM with positive drift $ |Y^{-\mu}| $ and the local time  $ \gamma_t (Y^{-\mu}) $ with a Brownian motion with negative drift $ X^{-\mu} $ and its maximum, these processes are not directly related anymore to the inverse of a tempered 
stable subordinator. It is instead necessary to introduce a ``truncated version" of the inverse subordinator
as in the following theorem.

\begin{theorem}
\label{thm00}
For the negatively drifted Brownian motion $X^{-\mu}$ with $X^{-\mu}_0=0$, we have that
\begin{align}
\label{eq:equal-xmu-inv-neg}
\max_{0\leq s \leq t} X^{-\mu}_s \stackrel{d}{=} L_t \wedge T_\mu, \quad t>0
\end{align}
where $T_\mu$ is an exponential r.v. (with parameter $\mu>0$) independent from $L$ which is an inverse to a relativistic stable subordinator with symbol \eqref{symbGenSub}, $\eta=(-\mu/2)^2$.
\end{theorem}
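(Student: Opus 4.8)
The plan is to mimic the Laplace transform argument used in the proof of Theorem \ref{thm0}, now accounting for the negative drift. First I would recall the distribution of the running maximum of a Brownian motion with negative drift. Using the same reference as in \eqref{eq:bm-drift} (with $\mu$ replaced by $-\mu$), I expect
\begin{align*}
\mathbf P_0\left(\max_{0\leq s\leq t} X^{-\mu}_s > \beta\right)
= \int_\beta^\infty \frac{e^{-\frac{z^2}{4t}}}{\sqrt{4\pi t}} e^{-\frac{\mu^2 t}{4}}
\left[ e^{-\frac\mu2 z} + e^{-\frac\mu2(2\beta - z)}\right] \mathrm dz, \qquad \beta>0,
\end{align*}
and then compute its Laplace transform in $t$ via the Gaussian formula \eqref{eq:lap-gauss} exactly as in \eqref{eq:lap-max-bmu}. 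The key difference is the sign in the exponentials $e^{-\frac\mu2 z}$: carrying the $z$-integration through, the term $e^{-\frac\mu2(2\beta-z)}\,e^{-z\sqrt{\lambda+\mu^2/4}}$ still integrates to something proportional to $e^{-\beta(\sqrt{\lambda+\mu^2/4}-\mu/2)}$, but the term $e^{-\frac\mu2 z}\,e^{-z\sqrt{\lambda+\mu^2/4}}$ now produces a factor $1/(\sqrt{\lambda+\mu^2/4}+\mu/2)$ \emph{twice} rather than the combination that previously telescoped to $1/\lambda$. After simplification I expect to obtain
\begin{align*}
\int_0^\infty e^{-\lambda t}\,\mathbf P_0\left(\max_{0\leq s\leq t}X^{-\mu}_s > \beta\right)\mathrm dt
= \frac{1}{\lambda}\, e^{-\beta(\sqrt{\lambda+\mu^2/4}-\mu/2)}\cdot \frac{\sqrt{\lambda+\mu^2/4}-\mu/2}{\sqrt{\lambda+\mu^2/4}+\mu/2},
\end{align*}
or an equivalent form; the precise algebra is routine but must be done carefully.

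The second half of the argument is to show that the right-hand side of the displayed identity is also the Laplace transform of $\mathbf P_0(L_t\wedge T_\mu > \beta)$. Since $T_\mu$ is independent of $L$ with $\mathbf P(T_\mu>x)=e^{-\mu x}$, we have $\mathbf P_0(L_t\wedge T_\mu>\beta)=e^{-\mu\beta}\,\mathbf P_0(L_t>\beta)$, so using \eqref{eq:lap-inv-temp} from the proof of Theorem \ref{thm0},
\begin{align*}
\int_0^\infty e^{-\lambda t}\,\mathbf P_0(L_t\wedge T_\mu>\beta)\,\mathrm dt
= e^{-\mu\beta}\cdot \frac1\lambda\, e^{-\beta(\sqrt{\lambda+\mu^2/4}-\mu/2)}
= \frac1\lambda\, e^{-\beta(\sqrt{\lambda+\mu^2/4}+\mu/2)}.
\end{align*}
I would then check that $\sqrt{\lambda+\mu^2/4}+\mu/2 = (\lambda)/(\sqrt{\lambda+\mu^2/4}-\mu/2)$, which is just the identity $(\sqrt{\lambda+\mu^2/4})^2-(\mu/2)^2=\lambda$, so that this expression matches the Laplace transform of the maximum computed above. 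Since Laplace transforms in $t$ determine the one-dimensional distributions (and $\beta$ is an arbitrary positive level), the equality in law \eqref{eq:equal-xmu-inv-neg} follows.

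The main obstacle I anticipate is bookkeeping in the $z$-integral: with the negative drift the two exponential terms in the bracket no longer combine to cancel the $1/\lambda$ denominator the way they did for positive drift, so one must track the surviving factor $1/(\sqrt{\lambda+\mu^2/4}+\mu/2)$ (equivalently $e^{-\mu\beta}$ after using the difference-of-squares identity) and confirm it produces exactly the exponential tempering $e^{-\mu\beta}$ of the truncation by $T_\mu$. A secondary point worth a sentence is justifying that $\mathbf P_0(X^{-\mu}$'s maximum $>\beta)$ has the claimed integral representation for all $\beta>0$ including the boundary behaviour at $\beta=0$ (where $L_t\wedge T_\mu$ can be $0$ only at $t=0$), but this is handled exactly as the corresponding remark in the proof of Theorem \ref{thm0}. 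Everything else is a direct transcription of the positive-drift computation with $\mu \mapsto -\mu$.
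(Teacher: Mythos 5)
Your overall strategy is exactly the paper's: compute the Laplace transform in $t$ of $\mathbf P_0\left(\max_{0\leq s\leq t}X^{-\mu}_s>\beta\right)$, compute that of $\mathbf P_0(L_t\wedge T_\mu>\beta)$ using independence, and match. The second half of your argument is correct and coincides with the paper's: $\mathbf P_0(L_t\wedge T_\mu>\beta)=e^{-\mu\beta}\,\mathbf P_0(L_t>\beta)$ has Laplace transform $\lambda^{-1}e^{-\beta(\sqrt{\lambda+\mu^2/4}+\mu/2)}$.

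However, your anticipated form of the Laplace transform of the maximum is wrong, and the step you propose to reconcile it would fail. Writing $a=\sqrt{\lambda+\mu^2/4}$, the two terms in the bracket integrate over $z\in(\beta,\infty)$ against $e^{-za}/(2a)$ to give $e^{-\beta(a+\mu/2)}/(a+\mu/2)$ and $e^{-\mu\beta}e^{-\beta(a-\mu/2)}/(a-\mu/2)=e^{-\beta(a+\mu/2)}/(a-\mu/2)$ respectively (up to the $1/(2a)$ prefactor): you do \emph{not} get the factor $1/(a+\mu/2)$ twice, and the bracket telescopes exactly as in the positive-drift case, since
\begin{align*}
\frac{1}{2a}\left[\frac{1}{a+\mu/2}+\frac{1}{a-\mu/2}\right]=\frac{1}{a^2-\mu^2/4}=\frac{1}{\lambda}.
\end{align*}
The only change relative to Theorem \ref{thm0} is in the exponent, and the correct answer is $\lambda^{-1}e^{-\beta(a+\mu/2)}$ — precisely \eqref{eq:lap-max-bmu-neg} — which matches the transform of $L_t\wedge T_\mu$ on the nose, with no further identity needed. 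By contrast, the expression you wrote, $\lambda^{-1}e^{-\beta(a-\mu/2)}\cdot\frac{a-\mu/2}{a+\mu/2}$, differs from the target by the factor $\frac{a-\mu/2}{a+\mu/2}$ in place of $e^{-\mu\beta}$; the former is independent of $\beta$ and the latter is not, so no identity in $\lambda$ (in particular not $a+\mu/2=\lambda/(a-\mu/2)$) can equate them. Once the $z$-integral is done correctly the proof closes along the lines you intend, so this is an algebra slip rather than a conceptual gap — but as written the matching step does not go through.
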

\begin{proof}

We check that the Laplace transforms of the distribution of both sides of \eqref{eq:equal-xmu-inv-neg} coincide. 
The Laplace transform 
of the distribution of the maximum \eqref{eq:lap-max-bmu} in this case becomes

\begin{align}\label{eq:lap-max-bmu-neg}
\int_{0}^{\infty} 
e^{-\lambda t} \mathbf P_0\left(
\max_{0 \leq s \leq t} X^{-\mu}_s > \beta
\right) dt
&=
\frac 1 \lambda e^{-\beta \left(\sqrt{\lambda + \frac{\mu^2} 4} + \frac \mu 2 \right)  }.
\end{align}

Note that \eqref{eq:lap-max-bmu-neg} is now different from \eqref{eq:lap-inv-temp}, where the tempering parameter cannot be negative. This is why \autoref{thm0} does not apply in this case. 

Now, by considering \eqref{eq:lap-max-bmu-neg} and \eqref{eq:lap-inv-temp} we have that 
\begin{align}\label{eq:lap-max-bmu-neg-vs-inv}
\int_{0}^{\infty} 
e^{-\lambda t} \mathbf P_0\left(
\max_{0 \leq s \leq t} X^{-\mu}_s > \beta
\right) dt
&=
\frac 1 \lambda e^{-\beta \left(\sqrt{\lambda + \frac{\mu^2} 4} + \frac \mu 2 \right)  } 
=
e^{- \mu \beta}
\frac 1 \lambda e^{-\beta \left(\sqrt{\lambda + \frac{\mu^2} 4} - \frac \mu 2 \right)  } 
\\
&=
\int_{0}^{\infty} e^{-\lambda t} e^{-\mu \beta} \mathbf P_0(L_t > \beta) \,\mathrm d t
\notag \\
&=
\int_{0}^{\infty} e^{-\lambda t} g^\mu(\beta, t) \,\mathrm d t
 \,\,,\qquad \beta > 0.
 \notag 
\end{align}
where $g^\mu(\beta, t) =  e^{-\mu \beta}P(L_t > \beta) $. The quantity 
$ 1 - g^\mu(\beta, t) $ coincides with the distribution function of $ L_t \wedge T_\mu $, where $ T_\mu $ is an independent exponential random variable with parameter $ \mu $ and $ L_t $ is  assumed to start from zero. 
In fact, by independence,
\begin{align}
\label{eq:lt-min-proof}
	\mathbf P_0( L_t \wedge T_\mu > \beta) &=
	P(L_t>\beta) \, \mathbf E \left(\mathbf 1_{(T_\mu > \beta)} \right)
\end{align}
Thus by \eqref{eq:lap-max-bmu-neg} the result is proved. 
\end{proof}

\begin{corollary}
\label{cor:inverse-local-time-negative}
For the local time (at zero) of $Y^{-\mu}$ we have that
\begin{equation}
\label{relationLgammaY-negtivative} 
\gamma_t( Y^{-\mu})  \stackrel{d}{=} L_t \wedge T_\mu \, \qquad  t >0
\end{equation}
where $T_\mu$ is an independent exponential r.v. with parameter $\mu$.
\end{corollary}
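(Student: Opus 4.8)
The plan is to mirror, almost verbatim, the proof of Corollary \ref{cor:inverse-local-time}, simply replacing the positive-drift ingredients by their negative-drift counterparts: the Graversen--Shiryaev identity \eqref{resultGravShir} in the form appropriate for $-\mu$, and Theorem \ref{thm00} in place of Theorem \ref{thm0}. First I would recall that the equality in distribution \eqref{resultGravShir} remains valid with $-\mu$ in place of $\mu$ throughout, i.e.
\begin{align*}
\left(\max_{0\leq s \leq t} X^{-\mu}_s - X^{-\mu}_t,\ \max_{0\leq s \leq t} X^{-\mu}_s \right) \stackrel{d}{=} \big(|Y^{-\mu}_t|,\ \gamma_t(Y^{-\mu})\big), \qquad t>0,
\end{align*}
and in particular, projecting onto the second coordinate, $\gamma_t(Y^{-\mu}) \stackrel{d}{=} \max_{0\leq s\leq t} X^{-\mu}_s$ for every $t>0$.

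Then I would invoke Theorem \ref{thm00}, which identifies the law of the running maximum of the negatively drifted Brownian motion, namely $\max_{0\leq s\leq t} X^{-\mu}_s \stackrel{d}{=} L_t \wedge T_\mu$, where $L$ is the inverse to the relativistic stable subordinator with symbol \eqref{symbGenSub} and $\eta=(\mu/2)^2$, and $T_\mu$ is an independent exponential random variable of parameter $\mu$. Chaining the two equalities in distribution produces exactly \eqref{relationLgammaY-negtivative}.

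The only point requiring attention is the assertion that the identity \eqref{resultGravShir} carries over unchanged to negative drift; this, however, is precisely the observation already recorded in the discussion preceding Theorem \ref{thm00} — the Graversen--Shiryaev identity holds for any real value of the drift, and it is only the subsequent probabilistic reading of $|Y^{-\mu}|$ (a reflecting Brownian motion with \emph{positive} drift) and of the associated time process that changes. Hence no additional computation is needed beyond the citation. (Should a self-contained argument be preferred, one could instead re-derive the marginal statement $\gamma_t(Y^{-\mu}) \stackrel{d}{=} \max_{0\le s\le t} X^{-\mu}_s$ directly from the Skorokhod reflection representation of $Y^{-\mu}$, but this is not necessary here.) As in Corollary \ref{cor:inverse-local-time}, the degenerate case $\mu=0$ is consistent with Remark \ref{remarkLife}: then $T_0=\infty$ almost surely, $L_t\wedge T_0 = L_t$, and one recovers the classical equality among the maximum, the local time, and the inverse to a $1/2$-stable subordinator.
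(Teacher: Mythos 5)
Your proposal is correct and follows the same route as the paper: the paper's proof likewise applies the Graversen--Shiryaev identity \eqref{resultGravShir} with $-\mu$ in place of $\mu$ (whose validity for negative drift is noted in the discussion preceding Theorem \ref{thm00}) and then chains it with Theorem \ref{thm00}. No gaps.
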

\begin{proof}
By applying the same arguments as in the proof of Corollary \eqref{cor:inverse-local-time} we can show that \eqref{relationLgammaY-negtivative} holds true. We stress the fact that $ Y^{-\mu} $ is the process such that $ |Y^{-\mu}| $ is a RBM with \emph{positive drift} $ \mu $.
\end{proof}

Let us discuss the Figures we enclose to our presentation. \autoref{fig:brown} shows some sample paths of the processes $ Y^{\mu} $ and $ Y^{-\mu}  $ as well 
as the corresponding reflecting processes $  |Y^{\mu}| $ and  $ |Y^{-\mu}|  $. We see that in the case of the RBM positive drift, i.e. $ |Y^{-\mu}|  $,  the sample paths tend to travel further from the barrier, 
whereas in the case of negative drift, i.e. $ |Y^{\mu}|  $, the sample path is constantly pushed towards the barrier. This gives an intuitive explanation of the difference between the relations \eqref{relationLgammaY} and  \eqref{relationLgammaY-negtivative}. In the second case since the process $ Y^{-\mu}  $ travels away from the barrier its local time at zero tends to stop increasing. This corresponds to the fact that the local time in this case has the same distribution of a randomly truncated inverse subordinator. \autoref{fig:inv-sub} shows a comparison between the sample paths of an inverse stable subordinator and the paths of the maximum of a drifted Brownian motion. In particular \autoref{fig:inv-sub-a} shows two sample paths of $ L_t $ while \autoref{fig:inv-sub-b} shows the same sample paths randomly truncated with exponential random variables (blue horizontal lines), i.e. realizations of 
$ L_t \wedge T_\mu $. \autoref{fig:inv-sub-c} shows a sample path of a Brownian motion with \emph{positive} drift and its running maximum $ \max_{0\leq s \leq t}  X^\mu_s$. The similarity with the sample paths in 
\autoref{fig:inv-sub-a} illustrates the equality in distribution proved in \autoref{thm0}.
\autoref{fig:inv-sub-d} shows a Brownian motion with \emph{negative} drift and its maximum. Note that 
as the sample paths travels away from zero the maximum stops increasing,  exhibiting a behavior 
similar to the paths in \autoref{fig:inv-sub-b}: this is the thesis of \autoref{thm00}.

\begin{figure}[h]
	\centering
	\begin{subfigure}{0.475\textwidth}
		\centering
		\begin{overpic}[width=.8\textwidth]{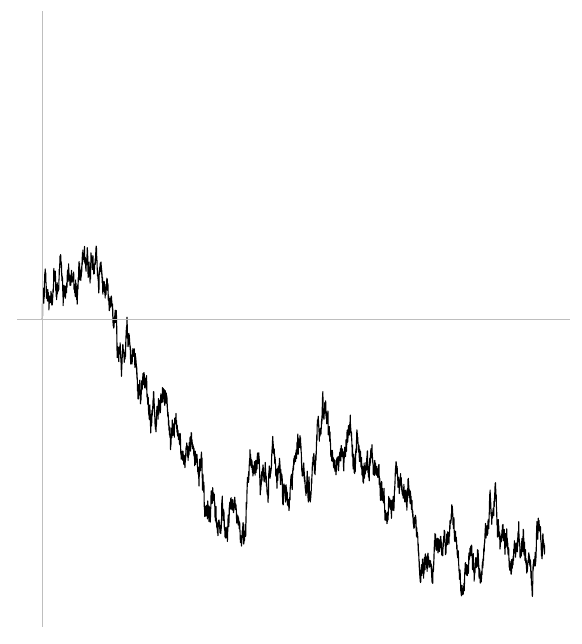}
		\end{overpic}
		\caption{sample path of $ Y^{-\mu}_t $ \\ \hphantom{a}}
	\end{subfigure}
	\hfill
	\begin{subfigure}{0.475\textwidth}
		\centering
		\begin{overpic}[width=.8\textwidth]{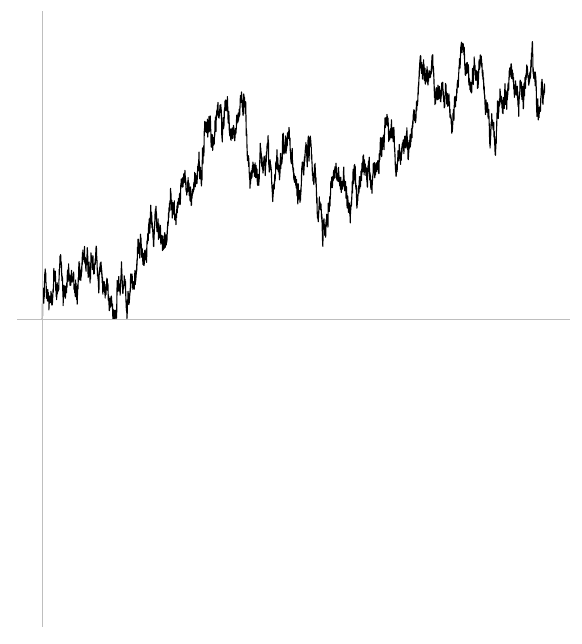}
		\end{overpic}
		\caption{sample path of $ |Y^{-\mu}_t| $, i.e. an RBM with positive drift.}
	\end{subfigure}	
	
	\begin{subfigure}{0.475\textwidth}
		\centering
		\begin{overpic}[width=.8\textwidth]{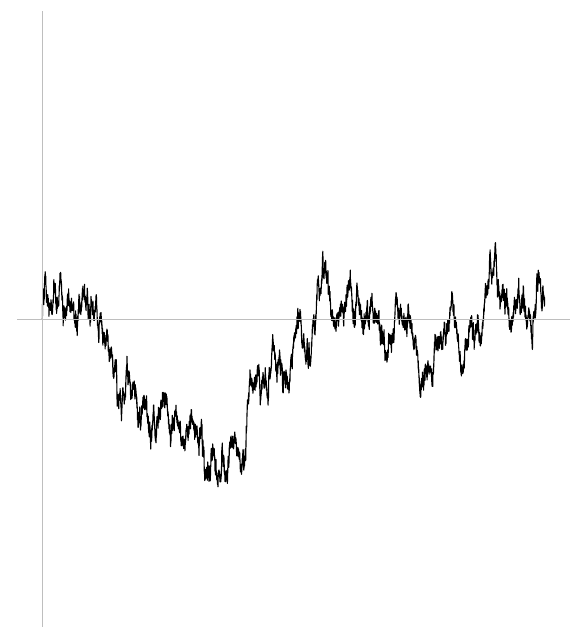}
		\end{overpic}
		\caption{sample path of $ Y^{\mu}_t $ \\ \hphantom{a}}
	\end{subfigure}
	\hfill
	\begin{subfigure}{0.475\textwidth}
		\centering
		\begin{overpic}[width=.8\textwidth]{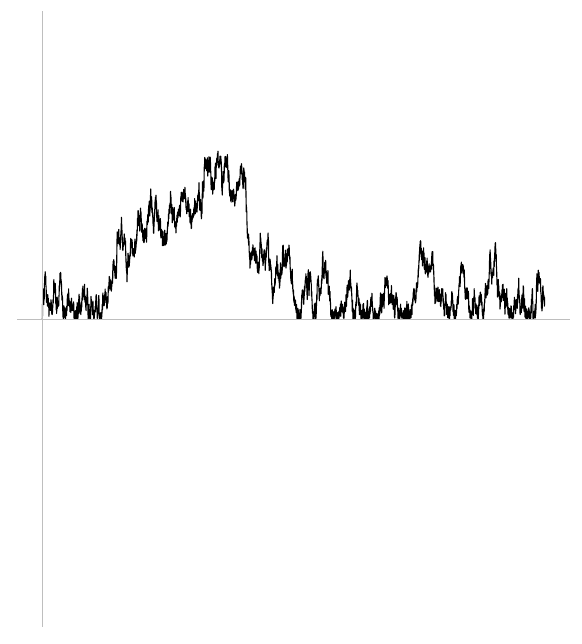}
		\end{overpic}
		\caption{sample path of $ |Y^{\mu}_t| $, i.e. an RBM with negative drift.}
	\end{subfigure}	
	
	\caption{Comparison of a sample paths of the solution to \eqref{eq:sde-pre-reflecting} and the corresponding RBM with drift.}
	\label{fig:brown}	
	
\end{figure}

\begin{figure}[h]
	\centering
	\begin{subfigure}{0.475\textwidth}
		\centering
		\begin{overpic}[width=.8\textwidth]{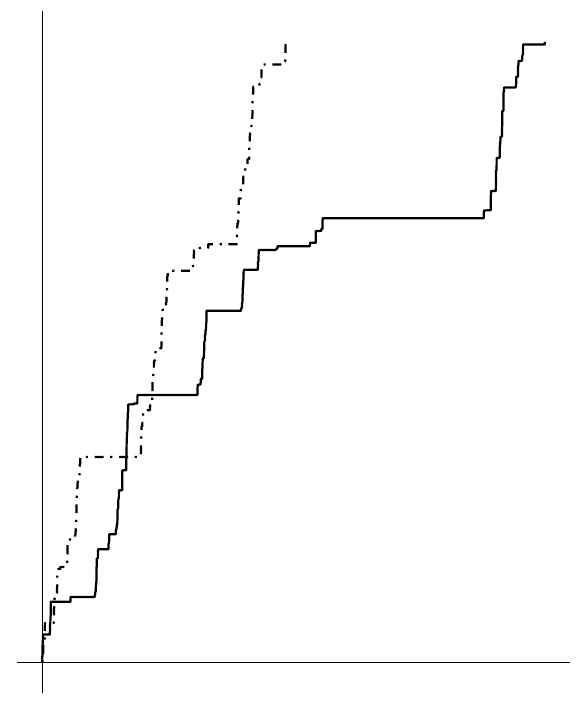}
		\end{overpic}
		\caption{sample paths of $ L_t $}
		\label{fig:inv-sub-a}
	\end{subfigure}
	\hfill
	\begin{subfigure}{0.475\textwidth}
		\centering
		\begin{overpic}[width=.8\textwidth]{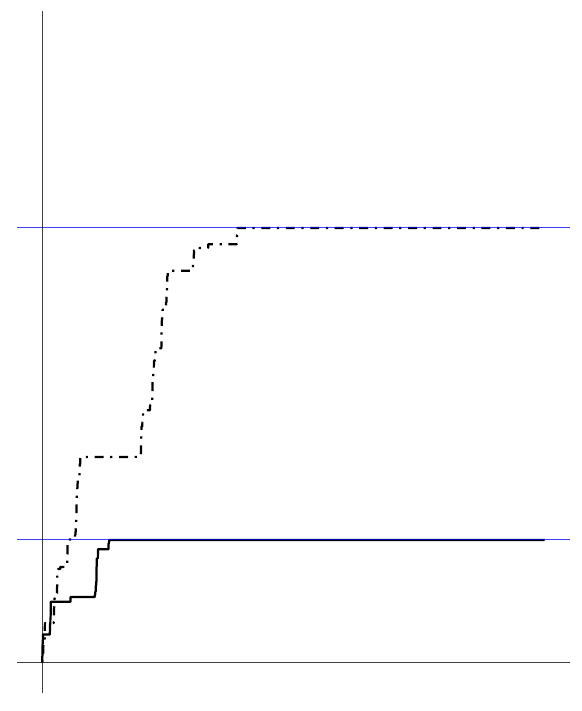}
		\end{overpic}
		\caption{sample paths of $ L_t \wedge T_\mu $}
		\label{fig:inv-sub-b}
	\end{subfigure}	
	
	\begin{subfigure}{0.475\textwidth}
		\centering
		\begin{overpic}[width=.8\textwidth]{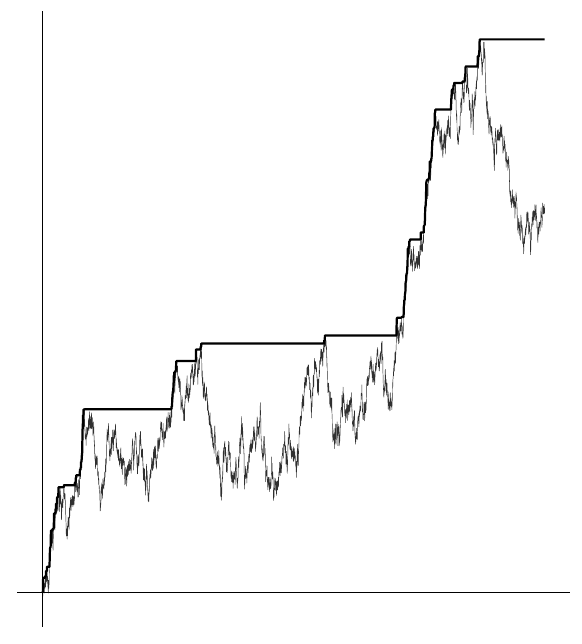}
		\end{overpic}
		\caption{sample path of $ X_t^\mu $ and its running maximum $ \max_{0\leq s\leq t} X^\mu_s $}
		\label{fig:inv-sub-c}
	\end{subfigure}
	\hfill
	\begin{subfigure}{0.475\textwidth}
		\centering
		\begin{overpic}[width=.8\textwidth]{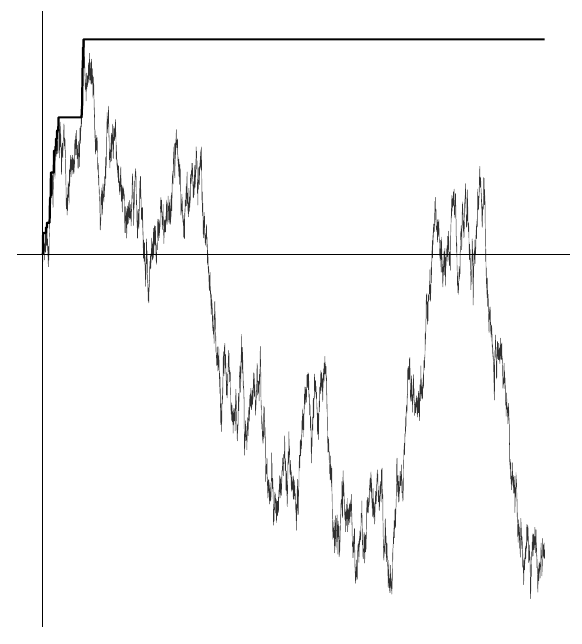}
		\end{overpic}
		\caption{sample path of $ X_t^{-\mu }$ and its running maximum $ \max_{0\leq s\leq t} X^{-\mu}_s $ }
		\label{fig:inv-sub-d}
	\end{subfigure}	
	
	\caption{Comparison of a sample path of an inverse tempered stable subordinator and the maximum of a Brownian motion with drift}
	\label{fig:inv-sub}
	
\end{figure}

\section{Helpful intuitive introduction to fractional boundary conditions}
\label{helpfulSection}
Here we consider a particular and instructive case which gives an helpful and intuitive interpretation of the main result of Section \ref{sec:caseI}. The proofs of the following statements have been postponed in the Appendix.

Let us consider the generator $(G_0, D(G_0))$ of the reflected Brownian motion on $[0, \infty)$ with elastic condition at $x=0$ for which we have that
\begin{align}
\label{helpfulSemig}
P^0_t \mathbf{1}(x) =  \int_0^\infty \big( g(t,x-y) - g(t,x+y)\big) \, dy + 2 \int_0^\infty e^{-c \, w} g(t,w+x) \, dw.
\end{align}
First we observe that formula \eqref{helpfulSemig} has the following representation 
\begin{align}
P^0_t \mathbf{1}(x) =  1 - \int_0^t \frac{x}{s} g(s,x) \, ds + 2 \int_0^\infty e^{-c \, w} g(t,w+x) \, dw=: F(t,x) \label{helpfulProof}
\end{align}
in which the density of the lifetime $\zeta$ emerges as mentioned in Remark \ref{remarkLife}.  Formula \eqref{helpfulProof}, in turn, can be written by considering the non-negative and non-decreasing process $A_t$ as
\begin{align}
\label{helpfulProbRep}
F(t,x) = & 1 - \mathbf{P}_0(A^{-1}_x \leq t) + e^{x \,c} \mathbf{E}_0[ e^{-c\, A_t} \mathbf{1}_{(A_t>x)}], \quad t\geq 0, \; x \geq 0
\end{align}
for which, at the boundary point $x=0$, we get  
\begin{align*}
F(t,0) = \mathbf{E}_0[e^{-c\, A_t}], \quad t\geq 0. 
\end{align*}
The process $A^{-1}_t = \inf\{s\geq 0\,:\, A_s \geq t\}$ is the inverse to $A_t$. We have the following interesting cases at the boundary point $x=0$:
\begin{itemize}
\item[i)] 
$A_t = \gamma_t$ is the Brownian local time and the usual condition writes
\begin{align}
\label{BChelpful1}
\frac{\partial F}{\partial x}(t,0) = c\, F(t,0).
\end{align}
The elastic condition  \eqref{BChelpful1} introduces exponential solutions.
\item[ii)]
$A_t= L_t^0$ is the inverse to a stable subordinator (of order $\alpha=1/2$, we use the superscript and write $L^0$ instead of $L$ to underline that $\eta=0$) and
\begin{align}
\label{BChelpful2}
D^\frac{1}{2}_t F(t,0) = -c\, F(t,0)
\end{align}
whose solution is the Mittag-Leffler function
$$F(t,0)=E_\frac{1}{2}(-c\, \sqrt{t}).$$
The elastic condition \eqref{BChelpful2} introduces solutions to relaxation equations.
\item[iii)] The boundary condition 
\begin{align}
\label{GovEqAHelpful}
D^\frac{1}{2}_t F(t,0) = - \frac{\partial F}{\partial x}(t,0)
\end{align}
holds true. Despite the fact that we lose the dependence from the elastic coefficient $c_0$, we get information about the additive functional. Indeed, \eqref{GovEqAHelpful} is the governing equation of $L^0$. 
\end{itemize}
Obviously $\gamma_t \stackrel{law}{=} L^0_t$ and their sample paths are both positive and non decreasing with $\gamma_0=L^0_0=0$. Both conditions \eqref{BChelpful1} and \eqref{BChelpful2} give unique characterization of the boundary behaviour of the reflected Brownian motion at $x=0$.

%

\section{Fractional boundary conditions}

\label{sec:caseI}
We discuss here the connection between the infinitesimal generators $G_\mu$ and $G_{-\mu}$ and the tempered derivative of order $1/2$. The order $1/2$ seems to be naturally related to the fact that $G_{\pm \mu}$ is a second order operator (see for example \cite[]{DovSPL}). The drift $\pm \mu$ is related to the tempering parameter $\eta$ of the tempered derivative by means of the relation $\eta = (\pm \mu/2)^2$.

\subsection{The positively drifted Brownian motion}
We focus on the function 
$$u \in C^{1,2}(AC(0, \infty)\times [0, \infty), [0, \infty))$$ solving 
\begin{equation}
\label{probExit}
\begin{cases}
\displaystyle  \frac{\partial u}{\partial t} = \mu \frac{\partial u}{\partial x} + \frac{\partial^2 u}{\partial x^2}\\
\displaystyle u(0, x) = \mathbf{1}(x \geq 0)
\end{cases}
\end{equation}
with the boundary condition
\begin{align}
\label{BCprobprobExit}
\mathfrak{D}^{\frac{1}{2},\mu}_t u(t, 0) + (c + \mu) u(t,0) = \mu, \quad t > 0.
\end{align}
Notice that we consider here the boundary condition \eqref{BCprobprobExit} in place of
\begin{align}
\label{BCstandard}
\frac{\partial u}{\partial x}(t,0) = c\, u(t,0), \quad t>0.
\end{align}
We observe that the condition \eqref{BCprobprobExit} can be rewritten as
\begin{align*}
\int_0^t (G_\mu u)(t-s,0)\,\Pi((s, \infty))\, ds + \mu\, u(t,0) + c \, u(t, 0) = \mu, \quad t>0 
\end{align*}
by following the definition \eqref{defDerTemp}. For $u \in D(G_\mu)$, we formally have
\begin{align*}
\mathfrak{D}^{\frac{1}{2},\mu}_t u(t, 0) + \frac{\mu + c}{c} \frac{\partial u}{\partial x}(t, 0) = \mu, \quad t>0
\end{align*}
or equivalently
\begin{align*}
\int_0^t (G_\mu u)(t-s,0)\,\Pi((s, \infty))\, ds + \frac{\mu + c}{c} \frac{\partial u}{\partial x}(t, 0) = \mu, \quad t>0. 
\end{align*}

Further on we will write $\bar{\mathbf{1}}(x) := \mathbf{1}(x\geq 0)$ in order to streamline the notation.
 
\begin{theorem}
\label{thmI}
Let us consider $u=u(t,x)$ given in \eqref{solmuSemig}. 
Then, the following statements hold:
\begin{itemize}
\item[i)] $u$ is the unique (classical) solution to \eqref{probExit} - \eqref{BCstandard};
\item[ii)] $u$ is the unique (classical) solution to \eqref{probExit} - \eqref{BCprobprobExit};
\item[iii)] $u$ has the probabilistic representation
\begin{align}
\label{solmuSemigRep}
u(t,x) 
= & 1 - \frac{c}{\mu + c} e^{-x\mu} \mathbf{E}_0\left[\left( 1 - e^{-(\mu + c) (L_t - x)} \right) \mathbf{1}_{(L_t \geq x)}\right] 
\end{align}
where $L$ is an inverse to a relativistic stable subordinator with symbol \eqref{symbGenSub}, $\eta=(\mu/2)^2$.
\end{itemize}
\end{theorem}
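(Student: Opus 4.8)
The plan is to establish the three assertions by essentially reducing everything to the explicit formula \eqref{solmuSemigRep} via the Laplace transform and then verifying both boundary conditions on the resulting expression. First I would compute, for the semigroup representation $u(t,x)=\mathbf E_x[\bar{\mathbf 1}(\widetilde X^\mu_t)]=P^\mu_t\bar{\mathbf 1}(x)$, the double Laplace transform $\widehat{\widetilde u}(\lambda,\xi)$ in $t$ and $x$, starting from the explicit kernel $p(t,x,y)=m^\mu(t,x,y)p^0(t,x,y)$ in \eqref{density:pmu}. Using the Gaussian Laplace identity recalled in the Appendix (as in the proof of Theorem~\ref{thm0}) one gets a closed rational-in-$\sqrt{\lambda+\mu^2/4}$ expression; matching it against the Laplace transform of the right-hand side of \eqref{solmuSemigRep}, which is computed term-by-term using \eqref{LT1} of Proposition~\ref{proppotentialL} with $\theta=\mu+c_\mu$ and the prefactor $e^{-x\mu}$, proves iii). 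Alternatively, and more in the spirit of the helpful Section~\ref{helpfulSection}, one can rewrite $P^\mu_t\bar{\mathbf 1}(x)$ using the additive-functional (local time) representation $M^\mu_t=e^{-c_0\gamma_t}$ together with the Girsanov factor $m^\mu$, obtain the lifetime/local-time form analogous to \eqref{helpfulProbRep}, and then invoke Corollary~\ref{cor:inverse-local-time} (equivalently Theorem~\ref{thm0}) to replace $\gamma_t(Y^\mu)$ by $L_t$; this directly yields \eqref{solmuSemigRep}.

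Next, for i), that $u$ solves \eqref{probExit}–\eqref{BCstandard}: the PDE $\partial_t u=G_\mu u$ with datum $\bar{\mathbf 1}$ is immediate since $u=P^\mu_t\bar{\mathbf 1}$ and $\bar{\mathbf 1}$ (suitably interpreted, as in Section~\ref{helpfulSection}) lies in the relevant domain; the Robin condition $\partial_x u(t,0)=c_\mu u(t,0)$ follows from $u(t,\cdot)\in D(G_\mu)$, which is part of the construction of the semigroup $P^\mu_t$ generated by $(G_\mu,D(G_\mu))$. Uniqueness in the class $C^{1,2}(AC(0,\infty)\times[0,\infty),[0,\infty))$ follows by the Laplace-transform argument: the transformed problem is an ODE in $x$ with the boundary condition pinning down the one free constant, exactly as in the verification following \eqref{probell}. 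For ii), I would show \eqref{BCprobprobExit} is equivalent to \eqref{BCstandard} \emph{along the solution}: differentiating \eqref{solmuSemigRep} at $x=0$ gives $u(t,0)=1-\tfrac{c_\mu}{\mu+c_\mu}\mathbf E_0[1-e^{-(\mu+c_\mu)L_t}]$, so $t\mapsto u(t,0)$ is precisely the relaxation solution $r(t)$ of Proposition~\ref{propSolBCgeneral} with $a=\mu+c_\mu$, $b=\mu$, $c=1$; then \eqref{symbDermu} shows $\mathfrak D^{1/2,\mu}_t u(t,0)+(\mu+c_\mu)u(t,0)=\mu$, which is \eqref{BCprobprobExit}. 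Conversely, given any solution of \eqref{probExit}–\eqref{BCprobprobExit}, the same Laplace computation forces its boundary trace to be this $r(t)$ and hence forces \eqref{BCstandard}, giving uniqueness for ii) from uniqueness for i).

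Concretely the steps in order are: (1) compute $\widehat{\widetilde u}(\lambda,\xi)$ from \eqref{density:pmu} (or from the additive-functional representation) and simplify; (2) compute the Laplace transform of the claimed expression \eqref{solmuSemigRep} using Proposition~\ref{proppotentialL}, and match — this proves iii); (3) read off the boundary trace $u(t,0)=r(t)$ and connect with Proposition~\ref{propSolBCgeneral} to get equivalence of \eqref{BCstandard} and \eqref{BCprobprobExit} along solutions; (4) conclude i) from the semigroup property and Laplace uniqueness, and ii) from i) plus step~(3). The main obstacle I anticipate is step~(1): carrying the Girsanov-type weight $m^\mu(t,x,y)=e^{-\mu^2 t/4}e^{-\mu(x-y)/2}$ through the three pieces of $p^0$ in \eqref{density:p}–\eqref{density:palt} and keeping the $x$-dependence correct so that the double Laplace transform collapses to the clean form $\tfrac1\lambda$ minus a single term proportional to $e^{-x\mu}\,(\xi+\mu)^{-1}(\mu+c_\mu+\Phi(\lambda))^{-1}$ compatible with \eqref{LT1}; the algebra with $\sqrt{\lambda+\mu^2/4}\pm\mu/2$ is delicate but mechanical, and the cross-check against Proposition~\ref{proppotentialL} provides a strong consistency test. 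A secondary subtlety is the precise function class in which uniqueness is claimed — one must make sure the datum $\bar{\mathbf 1}$ and the non-$L^1$ (in $t$) nature of the solution, noted after \eqref{minimalReq}, are compatible with the Laplace inversion being justified, which is handled exactly as in the discussion around \eqref{probell}.
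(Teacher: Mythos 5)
Your proposal is correct and follows essentially the same route as the paper: compute the Laplace transform of $u$ in $t$ from the explicit kernel \eqref{density:pmu}, invert it via Proposition \ref{proppotentialL} with $\theta=\mu+c_\mu$ to get \eqref{solmuSemigRep}, and verify \eqref{BCprobprobExit} at $x=0$ through \eqref{symbDermu}. Your only variation — identifying the boundary trace $u(t,0)$ with the relaxation solution $r(t)$ of Proposition \ref{propSolBCgeneral} for $a=\mu+c_\mu$, $b=\mu$, $c=1$ instead of checking the Laplace identity $F_1(\lambda)+(c_0+\sqrt{\eta})\widetilde{u}(\lambda,0)=2\sqrt{\eta}/\lambda$ directly — is equivalent and is in fact the reading the paper itself gives in the remarks following the theorem.
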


\begin{remark}
The semigroup $P^\mu_t \bar{\mathbf{1}}(x)$ has the probabilistic representation \eqref{solmuSemigRep}. This means that we can use the properties of $L_t$ in order to obtain equivalence of functionals under $\mathbf{P}_0$ (in mean $\mathbf{E}_0$). Since $P^\mu_t \bar{\mathbf{1}}(x) = \mathbf{E}_x[M^\mu_t]$, the process $L_t$ can be considered in order to obtain information about $M^\mu_t$. We underline that, here, $L_t$ is independent from $(X^\mu_t, M^\mu_t)$, the only advantage we may have is given by the equivalence in  law expressed by \eqref{solmuSemig} and \eqref{solmuSemigRep}. A further interesting reading will be given ahead in Theorem \ref{thmIbis} and Theorem \ref{last-thmI}.  
\end{remark}

\begin{proof}[Proof of Theorem \ref{thmI}]
We proceed step by step, first discussing the point iii). By exploiting the resolvent formula  \eqref{eq:app-res} in the Appendix we have
\begin{align}
\label{LTuTHM1}
\widetilde{u}(\lambda, x) 
= & \int_0^\infty e^{-\lambda t} \, u(t,x)\, dt = R_\lambda \mathbf{ \bar 1} (x)\notag \\
= & \frac{1}{\lambda} - \frac{1}{\lambda} \left( 1 -  \frac{\Phi(\lambda) + \mu}{c + \mu +  \Phi(\lambda)}  \right) e^{- x(\Phi(\lambda)+\mu)} 
\end{align}
where the symbol $\Phi$ denotes
\begin{align*}
\Phi(\lambda) = \sqrt{\lambda+\frac{\mu^2}4} - \frac \mu 2.
\end{align*}
By considering the fact that (see \eqref{propPotL})
\begin{align*}
\int_0^\infty e^{-\lambda t} \mathbf{P}(L_t>x) dt = \int_x^\infty \frac{\Phi(\lambda)}{\lambda} \, e^{-s \Phi(\lambda)}\, ds
\end{align*}
together with Proposition \ref{proppotentialL} for 
\begin{align*}
\frac{\Phi(\lambda)}{\lambda} \frac{1}{c + \mu +  \Phi(\lambda)}  e^{- x(\Phi(\lambda) + \mu)},
\end{align*}
by observing that (recall that $T_a$ is an exponential r.v. with parameter $a$)
\begin{align*}
\frac{\mu}{\lambda} \frac{1}{c + \mu +  \Phi(\lambda)}  e^{- x(\Phi(\lambda) + \mu)} 
= & \frac{\mu}{\lambda} e^{-x\mu} \int_0^\infty e^{-w \, (c + \mu)} e^{- (w+x) \Phi(\lambda)} dw\\
= & \mu e^{-x \mu} \int_0^\infty e^{-\lambda t} \left( \int_0^\infty e^{-w(c + \mu)} \mathbf{P}_0(L_t > x+w)\, dw \right) dt\\
= & \int_0^\infty e^{-\lambda t} \left( \frac{\mu}{\mu + c} e^{-x\mu} \mathbf{P}_0(L_t - x > T_{\mu + c}) \right) dt,
\end{align*}
we obtain the inverse Laplace transform of $\widetilde{u}(\lambda, x)$ given by
\begin{align*}
u(t,x) 
= & 1 - \, e^{-x \mu } \mathbf{P}_0(L_t>x) + e^{-x\mu} \mathbf{E}_0 \left[ e^{- (\mu + c) (L_t - x)} \mathbf{1}_{(L_t \geq x)} \right]\\ 
& + \, \frac{\mu}{\mu + c} e^{-x\mu} \mathbf{E}_0\left[\left( 1 - e^{-(\mu + c ) (L_t - x)} \right) \mathbf{1}_{(L_t \geq x)}\right].
\end{align*}
Simple manipulation leads to \eqref{solmuSemigRep}.

Now we show that \eqref{solmuSemigRep} is the solution to \eqref{probExit} with \eqref{BCprobprobExit}. 

The Laplace transform of the solution as $ x\to 0^+ $ is, by using formula \eqref{eq:el-res0} in Appendix
\begin{align}\label{eq:lap-bd-pos}
	\tilde u (\lambda, 0) &
	= R_\lambda \mathbf{ \bar 1} (0)  =
	\frac 1{\sqrt{\lambda + \frac {\mu^2} 4} + \frac \mu 2 + c} 
	\int_{0}^{\infty} e^{- (\sqrt{\lambda + \frac {\mu^2} 4} - \frac \mu 2 )y} \,\mathrm d y
	\\
	&= 
	\frac{1}{\Phi(\Phi + c + \mu)} \notag 
\end{align} 

By exploiting relation \eqref{eq:lambda-eq}, we see that the Laplace transform of the LHS of \eqref{BCprobprobExit}
is 
\begin{align*}
	\Phi  \tilde u(\lambda, 0) - \frac{\Phi}{\lambda} u(0, 0) + (c+\mu) \tilde u(\lambda, 0)
	&=
	\frac{1}{\Phi + c +\mu } - \frac{\Phi}{\lambda} + 
	\frac{c + \mu}{\Phi(\Phi + c +\mu) }
	\\
	&=
	\frac{1}{\Phi} - \frac{1}{\Phi + \mu } = \frac{\mu}{\Phi(\Phi + \mu)}
	\\
	&=
	\frac{ \mu}{\lambda}.
\end{align*}
which is the Laplace transform of the RHS of \eqref{BCprobprobExit}. 

The point $i)$ does not need to be proved. This concludes the proof.
\end{proof}

\begin{corollary}
\label{coro:equivI}
For the process $\widetilde{X}^\mu_t$, $t\geq 0$ with generator $(G_\mu, D(G_\mu))$ the multiplicative functional  $\overline{M}^\mu_t$ is uniquely characterized by the boundary condition
\begin{align*}
\mathfrak{D}^{\frac{1}{2},\mu}_t u(t, 0) + (c + \mu ) u(t, 0) = \mu, \quad t \geq 0.
\end{align*}
Moreover, $\overline{M}^\mu_t$ is equivalent to $M^\mu_t$.
\end{corollary}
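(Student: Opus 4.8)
The plan is to reduce both assertions of the corollary to the two standard facts recalled in the Introduction: equivalence of multiplicative functionals is the same as generating the same semigroup (\cite[Proposition 1.9]{BluGet68}), and a multiplicative functional is uniquely determined by the semigroup it generates (\cite[Theorem 3.3]{BluGet68}). Hence it is enough to show that $M^\mu$ and $\overline{M}^\mu$ generate one and the same semigroup of the reflecting process $X^\mu$, and that this semigroup is precisely the one singled out by the boundary condition \eqref{BCprobprobExit}.

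First I would record that, since $\bar{\mathbf{1}}\equiv 1$ on the state space $[0,\infty)$, formula \eqref{solmuSemig} gives $\mathbf{E}_x[M^\mu_t]=P^\mu_t\bar{\mathbf{1}}(x)=u(t,x)$, with $u$ as in Theorem \ref{thmI}. By part (ii) of that theorem $u$ is the unique classical solution of \eqref{probExit}--\eqref{BCprobprobExit}, so the non-local condition \eqref{BCprobprobExit} indeed characterizes $t\mapsto\mathbf{E}_x[M^\mu_t]$. Next I would read the representation \eqref{solmuSemigRep} through the equality in law $\max_{0\le s\le t}X^\mu_s\stackrel{d}{=}L_t$ of Theorem \ref{thm0} together with Remark \ref{remarkLife}, which exhibits $M^\mu_t$ as $\mathbf{1}_{(t<\zeta)}$ with the lifetime $\zeta$ governed by a boundary additive functional run against an independent exponential clock: this identifies $\overline{M}^\mu_t$ as the corresponding killing-type multiplicative functional $\mathbf{1}_{(t<\overline\zeta)}$ of $X^\mu$, now built from the inverse tempered subordinator $L_t$ with $\eta=(\mu/2)^2$ and the exponential time $T_\mu$, and whose one-dimensional expectation is, by construction, the same $u(t,x)$ of \eqref{solmuSemigRep}. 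Consequently $\mathbf{E}_x[\overline{M}^\mu_t]$ also solves \eqref{probExit}--\eqref{BCprobprobExit}, and the uniqueness in Theorem \ref{thmI} yields $\mathbf{E}_x[M^\mu_t]=\mathbf{E}_x[\overline{M}^\mu_t]$ for all $x\ge 0$, $t\ge 0$.

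It remains to upgrade this identity of one-dimensional marginals to $\mathbf{E}_x[f(X^\mu_t)M^\mu_t]=\mathbf{E}_x[f(X^\mu_t)\overline{M}^\mu_t]$ for every bounded $f$, i.e.\ to the equality of the generated semigroups. I would do this by comparing the two killing clocks as additive functionals of $X^\mu$: testing against $e^{-\theta(\cdot)}$ with $\theta$ ranging as in Proposition \ref{proppotentialL} shows these additive functionals are equivalent, whence the semigroups coincide; equivalently, the formal identity noted just before Theorem \ref{thmI} turns \eqref{BCprobprobExit} into the Robin condition $\partial_x u(t,0)=c_\mu\,u(t,0)$ on $D(G_\mu)$, so the generator attached to $\overline{M}^\mu$ is again $(G_\mu,D(G_\mu))$. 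Once the two semigroups are identified, \cite[Proposition 1.9]{BluGet68} gives the equivalence $\overline{M}^\mu\sim M^\mu$ and \cite[Theorem 3.3]{BluGet68} gives that \eqref{BCprobprobExit} characterizes $\overline{M}^\mu$ uniquely. I expect this last upgrade to be the main obstacle: Theorem \ref{thmI} literally supplies only the orbit of $\bar{\mathbf{1}}$, so one must either rerun its Laplace computation with a general initial datum $f$ (checking that the resolvents of the local and the non-local problems agree on all $f$), or argue directly that the non-local clock and the Robin clock are equivalent additive functionals of $X^\mu$; the remaining steps are routine bookkeeping with the cited results from \cite{BluGet68}.
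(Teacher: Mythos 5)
Your proposal is correct and follows essentially the same route as the paper: both arguments rest on the unique determination of $\widetilde u(\lambda,0)$ by the Laplace-transformed boundary condition \eqref{lapUniqM} (equivalently, the uniqueness statement of Theorem \ref{thmI}), combined with the facts from \cite[Proposition 1.9 and Theorem 3.3]{BluGet68} that a multiplicative functional is characterized by, and equivalent to, any other generating the same semigroup. The upgrade from the orbit of $\bar{\mathbf 1}$ to the full semigroup that you flag as the main obstacle is a legitimate concern, but the paper's own one-line proof is no more explicit on that point, so you are not missing an ingredient that the paper actually supplies.
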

\begin{proof}
Indeed, the potential
\begin{align*}
\mathbf{E}_0\left[ \int_0^\infty e^{-\lambda t} \, M^\mu_t\, dt \right]
\end{align*}
coincides with $\widetilde{u}(\lambda, 0)$ uniquely determined in \eqref{eq:lap-bd-pos} which in turns, uniquely defines the solution to \eqref{probExit} - \eqref{BCprobprobExit} . 
\end{proof}

\begin{remark}
Since $u(0,0)=1$, from Proposition \ref{propSolBCgeneral} we know that $t \mapsto u(t,0)$ is monotone decreasing and such that, for $c>0$,
\begin{align*}
u(t,0) \downarrow \frac{\mu}{\mu + c} \in (0,1) \quad \textrm{as} \quad t\to \infty.
\end{align*}
If $c = 0$, then $u(t,0)=1$ for any $t$. 
\end{remark}

\begin{remark}
	The proof of Theorem \ref{thmI} exploits explicit probability distributions associated to the elastic Brownian motion in order to compute the solution of the  boundary value problem \eqref{probExit}-\eqref{BCstandard} and then check that it also satisfies the fractional condition \eqref{BCprobprobExit}. We note that it is possible to give an alternative proof by directly solving the fractional boundary problem \eqref{probExit}-\eqref{BCprobprobExit}. In fact, take Laplace transforms of \eqref{probExit} to get
	\begin{equation}\label{eq:direct-lap-eq}
	\lambda \tilde u(\lambda, x) - u(0,x) = \mu \partial_x \tilde u(\lambda, x) + \partial^2_{xx} \tilde u(\lambda,x).
	\end{equation}
	with $ u(0,x)=1 $. By taking Laplace transforms of \eqref{BCprobprobExit} we obtain
	\begin{equation}\label{eq:direct-lap-bc}
	\Phi(\lambda) \tilde{u} (\lambda ,0) - \frac{\Phi(\lambda)}{\lambda} u(0,0) + (\mu + c )\tilde u  = \frac \mu \lambda 
	\end{equation}
	with $ \Phi(\lambda) = \sqrt{\lambda + \frac{\mu^2}{4}} - \frac \mu 2 $ and $ u(0,0)=1 $. Thus we obtain
	\begin{equation}\label{key}
	\tilde u (\lambda,0) = \frac{1}{\lambda} \frac{\sqrt{\lambda + \frac{\mu^2}{4}} + \frac \mu 2}{\sqrt{\lambda + \frac{\mu^2}{4}} + \frac \mu 2 + c} =: c_\lambda
	\end{equation}
	Now let $v_\lambda(x) = \tilde{u}(\lambda, x)$. By considering \eqref{eq:direct-lap-eq} and \eqref{eq:direct-lap-bc} we see that \eqref{probExit}-\eqref{BCprobprobExit} may be rewritten as
	\begin{equation}\label{eq:direct-2nd-ode}
	\begin{cases}
	v_\lambda'' + \mu v_\lambda' - \lambda v_\lambda + 1 = 0 \\
	v_\lambda(0) = c_\lambda \\
	v_\lambda (x) \,\mathrm{bounded}
	\end{cases}
	\end{equation}
	which is a second order ODE that can be solved by standard techniques. It is then immediate to check that the unique solution to \eqref{eq:direct-2nd-ode} is precisely \eqref{LTuTHM1}.
\end{remark}

We now recall the condition
\begin{align*}
\mathcal{D}^{\frac{1}{2},\mu}_t u(t,0) + (\mu+c)\, u(t,0) = \mu, \quad t>0
\end{align*}
and the standard condition
\begin{align*}
\frac{\partial u}{\partial x}(t,0) = c \, u(t,0), \quad t>0
\end{align*}
which is associated to \eqref{probExit}. Then, we conclude with the following two results.

\begin{theorem}
\label{thmIbis}
The solution to \eqref{probExit} - \eqref{BCprobprobExit} has the probabilistic representation
\begin{align}
\label{solmuSemigRep2}
\displaystyle u(t,x) = 1 - \mathbf{E}_0 \left[ \left( 1- e^{-c \, (L_t \wedge T_\mu - x)} \right) \mathbf{1}_{(L_t \wedge T_\mu > x)} \right]
\end{align}
where $T_\mu$ is an exponential r.v. (with parameter $\mu>0$) independent from $L$ which is an inverse to a relativistic stable subordinator with symbol \eqref{symbGenSub}, $\eta=(\mu/2)^2$
\end{theorem}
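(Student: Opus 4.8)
The plan is to connect the representation \eqref{solmuSemigRep} already proved in Theorem \ref{thmI} with the new one \eqref{solmuSemigRep2}, i.e.\ to show that for every $t,x\geq 0$
\begin{align*}
1 - \frac{c_\mu}{\mu + c_\mu}\, e^{-x\mu}\, \mathbf{E}_0\!\left[\left(1 - e^{-(\mu+c_\mu)(L_t-x)}\right)\mathbf{1}_{(L_t\geq x)}\right]
= 1 - \mathbf{E}_0\!\left[\left(1 - e^{-c_\mu(L_t\wedge T_\mu - x)}\right)\mathbf{1}_{(L_t\wedge T_\mu > x)}\right],
\end{align*}
where on the right $T_\mu$ is an independent exponential random variable with parameter $\mu$. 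The most transparent route is to work on the Laplace transform side, since $\widetilde{u}(\lambda,x)$ was already computed in \eqref{LTuTHM1}, and then to identify the inverse Laplace transform directly with the right-hand side of \eqref{solmuSemigRep2}.

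First I would rewrite the right-hand side of \eqref{solmuSemigRep2} by conditioning on $T_\mu$ and using independence, splitting the event $\{L_t\wedge T_\mu > x\}$ into $\{x < T_\mu \le L_t\}$ and $\{x < L_t < T_\mu\}$, exactly as in the proof of Theorem \ref{thm00}, equation \eqref{eq:lt-min-proof}. On $\{T_\mu \le L_t\}$ the factor $e^{-c_\mu(L_t\wedge T_\mu - x)} = e^{-c_\mu(T_\mu - x)}$, while on $\{L_t < T_\mu\}$ it equals $e^{-c_\mu(L_t - x)}$. Taking expectation in $T_\mu$ first (its density is $\mu e^{-\mu s}$ on $(0,\infty)$) turns both contributions into ordinary integrals against $e^{-\mu s}$; collecting terms one arrives at an expression of the form $1 - A(t,x) + B(t,x)$ where $A,B$ are expectations of $e^{-\theta L_t}\mathbf{1}_{(L_t\geq x)}$-type functionals with $\theta \in \{\mu+c_\mu\}$ together with $\mathbf{P}_0(L_t > x)$ weighted by $e^{-\mu x}$. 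Then I would take the Laplace transform in $t$ of this expression using Proposition \ref{proppotentialL} (formulas \eqref{LT1} and \eqref{LT2} with $\Phi(\lambda) = \sqrt{\lambda+\eta}-\sqrt{\eta}$, $\eta = \mu^2/4$, so $\Phi(\lambda)+\mu = \sqrt{\lambda+\eta}+\sqrt{\eta}$), and check term by term that the result coincides with the last line of \eqref{LTuTHM1}, namely $\frac{1}{\lambda} - \frac{1}{\lambda}\bigl(1 - (\Phi(\lambda)+2\sqrt\eta)(c_\mu+2\sqrt\eta+\Phi(\lambda))^{-1}\bigr)e^{-x(\Phi(\lambda)+2\sqrt\eta)}$. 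Since the Laplace transform determines the function, this yields \eqref{solmuSemigRep2}. An alternative, essentially equivalent, bookkeeping is to start from \eqref{solmuSemigRep}, expand the bracket, and convert $\frac{c_\mu}{\mu+c_\mu}e^{-x\mu}\mathbf{E}_0[(1-e^{-(\mu+c_\mu)(L_t-x)})\mathbf{1}_{(L_t\geq x)}]$ into the $T_\mu$-mixture form by writing $\frac{c_\mu}{\mu+c_\mu} = 1 - \frac{\mu}{\mu+c_\mu}$ and recognising $\frac{\mu}{\mu+c_\mu}e^{-x\mu}\mathbf{P}_0(L_t-x>T_{\mu+c_\mu})$ — which already appeared inside the proof of Theorem \ref{thmI} — as the probability that an independent $\mathrm{Exp}(\mu)$ exceeds $L_t-x$ combined with an $\mathrm{Exp}(c_\mu)$ exponent; this is really just the memorylessness/thinning identity for competing exponentials, $\frac{\mu}{\mu+c_\mu} = \mathbf{P}(T_\mu < T_{c_\mu})$.

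The main obstacle I expect is purely the algebraic bookkeeping: there are several terms ($1$, the $\mathbf{P}_0(L_t>x)$ term, and two $\mathbf{E}_0[e^{-\theta(L_t-x)}\mathbf{1}_{(L_t\geq x)}]$ terms with $\theta = c_\mu$ and $\theta = \mu+c_\mu$), and one must be careful that the indicator is $\{L_t\wedge T_\mu > x\}$ rather than $\{L_t > x\}$, which is exactly the point where the truncation by $T_\mu$ enters and where a naive manipulation would drop a term. Handling the boundary value $x=0$ (where $\mathbf{1}_{(L_t\wedge T_\mu>0)}=1$ a.s.\ for $t>0$ since $L_t>0$ and $T_\mu>0$, and the formula reduces to $u(t,0)=\mathbf{E}_0[e^{-c_\mu(L_t\wedge T_\mu)}]$, consistent with Proposition \ref{propSolBCgeneral} applied with $a=c_\mu$, $b=0$... wait, with $b=\mu$, $a=\mu+c_\mu$) should be recorded as a consistency check rather than a separate argument. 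No genuinely new idea beyond Proposition \ref{proppotentialL} and the exponential-mixture identity is needed; the content is that the particular combination of $L_t$-functionals produced by the elastic-drift computation is precisely the law of the truncated process $L_t\wedge T_\mu$.
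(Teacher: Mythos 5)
Your proposal is correct and follows essentially the same route as the paper: the paper also verifies \eqref{solmuSemigRep2} by matching Laplace transforms in $t$ against \eqref{LTuTHM1} (rewritten with $\sqrt{\lambda+\mu^2/4}+\mu/2$ in the exponent), the only difference being that it invokes Theorem \ref{thm00} to read off the transforms of $\mathbf{P}_0(L_t\wedge T_\mu>x)$ and of the law of $L_t\wedge T_\mu$ directly, rather than integrating out $T_\mu$ by hand as you propose. Your secondary, pointwise derivation from \eqref{solmuSemigRep} via the competing-exponentials identity is also valid, but it is not the argument the paper gives.
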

\begin{proof}
Let us write \eqref{LTuTHM1} as follows
\begin{align}
\label{LTuTHM1-new}
\widetilde{u}(t,x) = \frac{1}{\lambda} - \frac{1}{\lambda} \left( 1- (\sqrt{\lambda + \mu^2/4} + \mu/2) \frac{1}{c + \mu/2 + \sqrt{\lambda + \mu^2/4}} \right) e^{-x (\sqrt{\lambda + \mu^2/4} + \mu/2)}.
\end{align}
From Theorem \ref{thm00}, we have that
\begin{align*}
\frac{1}{\lambda} e^{-x (\sqrt{\lambda + \mu^2/4} + \mu/2)} = \int_0^\infty e^{-\lambda t} \mathbf{P}_0(L_t \wedge T_\mu > x)\, dt
\end{align*}
and
\begin{align*}
\frac{\sqrt{\lambda + \mu^2/4} + \mu/2}{\lambda} e^{- x (\sqrt{\lambda + \mu^2/4} + \mu/2)} dx = \int_0^\infty e^{-\lambda t} \mathbf{P}_0(L_t \wedge T_\mu \in dx)\, dt.
\end{align*}
Thus, the integral 
\begin{align*}
\frac{\sqrt{\lambda + \mu^2/4} + \mu/2}{\lambda} \int_0^\infty e^{-w  ( c + (\sqrt{\lambda + (\mu/2)^2} + \mu/2)} e^{-x(\sqrt{\lambda + \mu^2/4} + \mu/2)} dw 
\end{align*}
takes the form
\begin{align*}
\int_0^\infty e^{-\lambda t} \mathbf{E}_0\left[ \int_0^\infty e^{- c (L_t \wedge T_\mu -x)} \mathbf{1}_{(L_t \wedge T_\mu >x)}  \right] dt.
\end{align*}
By collecting all the previous parts, we get that
\begin{align*}
u(t,x) = 1 - \mathbf{E}_0[\mathbf{1}_{(L_t \wedge T_\mu >x)}] + \mathbf{E}_0\left[ \int_0^\infty e^{- c \, (L_t \wedge T_\mu -x)} \mathbf{1}_{(L_t \wedge T_\mu >x)}  \right]
\end{align*}
which is the claimed result.
\end{proof}

\begin{remark}
	Formula \eqref{solmuSemigRep2} can be succinctly represented as 
	\begin{equation}
	\label{solmuSemigRep4}
		u(t,x) = \mathbf P_0(L_t \wedge T_\mu  - x < T_{c})
	\end{equation}
	where $ T_{c} $ is an exponential random variable of parameter $ c $ independent from $ L_t $ and $ T_\mu $, provided that $ c >0  $. This can be justified as follows
	\begin{align*}
	&\mathbf P_0(L_t \wedge T_\mu  - x < T_{c} \cap ((L_t \wedge T_\mu  - x > 0) \cup (L_t \wedge T_\mu  - x < 0)))
	\\
	&=
	\mathbf P_0(L_t \wedge T_\mu  - x < 0) + \mathbf{E}_0 \big[ \,  \mathbf{E} [ \mathbf 1_{ (T_{c} > L_t \wedge T_\mu  - x) } | L_t \wedge T_\mu]  \, \mathbf 1_{ (L_t \wedge T_\mu  - x>0)} \big ]
	\\
	&=
	\mathbf P_0(L_t \wedge T_\mu  < x) + \mathbf{E}_0\left[e^{- c_\mu ( L_t \wedge T_\mu  - x) } \mathbf 1_{ (L_t \wedge T_\mu  > x)}\right].
	\end{align*}
\end{remark}

We now present the last result for the positively drifted Brownian motion.
\begin{theorem}
\label{last-thmI}
The solution to \eqref{probExit} - \eqref{BCprobprobExit} has the probabilistic representation
\begin{align}
\label{solmuSemigRep3}
\displaystyle u(t,x) = 1 - \mathbf{E}_0 \left[ \left( 1- e^{-c \, (S^{-\mu}_t - x)} \right) \mathbf{1}_{(S^{-\mu}_t > x)} \right]
\end{align}
where
\begin{align*}
S^{-\mu}_t = \max_{0\leq s \leq t} X^{-\mu}_s, \quad t>0,\; \mu>0.
\end{align*}
\end{theorem}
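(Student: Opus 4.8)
The plan is to obtain Theorem \ref{last-thmI} as a direct corollary of Theorem \ref{thmIbis} and the equality in distribution of Theorem \ref{thm00}, without any new computation. Theorem \ref{thmIbis} already identifies the solution of \eqref{probExit}--\eqref{BCprobprobExit} as
\begin{align*}
u(t,x) = 1 - \mathbf{E}_0 \left[ \left( 1 - e^{-c_\mu(L_t \wedge T_\mu - x)} \right) \mathbf{1}_{(L_t \wedge T_\mu > x)} \right],
\end{align*}
and Theorem \ref{thm00} gives $S^{-\mu}_t = \max_{0\le s \le t} X^{-\mu}_s \stackrel{d}{=} L_t \wedge T_\mu$ for each fixed $t>0$. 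The first step is to notice that, for fixed $t>0$ and $x \geq 0$, the integrand is $\phi_x(Z)$ with $Z$ a single random variable and $\phi_x(y) := (1 - e^{-c_\mu(y-x)})\,\mathbf{1}_{(y>x)}$ a bounded Borel function on $[0,\infty)$ (taking values in $[0,1)$ when $c_\mu > 0$); hence $\mathbf{E}_0[\phi_x(Z)]$ depends on $Z$ only through its law. The second step is then simply to replace $Z = L_t \wedge T_\mu$ by the equidistributed $Z = S^{-\mu}_t$, which leaves the expectation unchanged and yields exactly \eqref{solmuSemigRep3}.

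As an independent check I would instead repeat the Laplace-transform argument used to prove Theorem \ref{thmIbis}, with $L_t \wedge T_\mu$ replaced by $S^{-\mu}_t$ throughout. That argument only exploits the tail transform $\int_0^\infty e^{-\lambda t} \mathbf{P}_0(L_t \wedge T_\mu > x)\, dt = \tfrac{1}{\lambda} e^{-x(\sqrt{\lambda + \mu^2/4} + \mu/2)}$ together with its derivative in $x$; by \eqref{eq:lap-max-bmu-neg} the running maximum $S^{-\mu}_t$ has precisely the same $t$-Laplace transform of its tail, so inverting the expression \eqref{LTuTHM1-new} for $\widetilde{u}(\lambda,x)$ term by term produces \eqref{solmuSemigRep3} directly.

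I do not expect any genuine difficulty: the statement is a consequence of two results already established earlier. The only points deserving a line of care are verifying that $\phi_x$ is bounded and measurable (so that the equality in law passes to the expectation), noting that the degenerate case $c_\mu = 0$ gives $u \equiv 1$ on both sides, and checking the boundary value $x = 0$, which matches the monotonicity and limit stated in the remark following Theorem \ref{thmI}.
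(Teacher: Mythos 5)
Your proposal is correct and follows the same route as the paper: the paper's proof of Theorem \ref{last-thmI} is precisely the observation that the representation of Theorem \ref{thmIbis} combined with the equality in distribution of Theorem \ref{thm00} gives \eqref{solmuSemigRep3}, since the expectation of a bounded Borel function of a single random variable depends only on its law. Your additional Laplace-transform cross-check and the remarks on the degenerate case $c_\mu=0$ are fine but not needed.
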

\begin{proof}
The proof follows immediately from Theorem \ref{thm00}.
\end{proof}

\begin{remark}
(About the reading of \eqref{BCprobprobExit}) Assume that the boundary conditions
\begin{align*}
\mathfrak{D}^{\frac{1}{2},\mu}_t u(t, 0) + (c + \mu)\, u(t, 0) = \mu 
\end{align*}
and
\begin{align*}
\frac{\partial u}{\partial x}(t,0) = c u(t,0)
\end{align*}
are equivalent. Are we able to obtain information about $M^\mu_t$ from the previous conditions? It seems that the first equation gives immediately the answer we are looking for. If we consider the boundary condition 
\begin{align*}
\mathfrak{D}^{\frac{1}{2},\mu}_t u(t, 0) + c_1\, u(t, 0) = c_2 
\end{align*}
we are able to characterize $M^\mu_t$ in terms of the coefficients $c_1, c_2$ as in the discussion below.

\end{remark}

We analyse some different cases concerned with \eqref{solmuSemigRep} and in particular with the lifetime $\zeta^\mu$ of the process $\widetilde{X}^\mu$. Recall that  
\begin{align*}
P^\mu_t \bar{\mathbf 1}(x) = \mathbf{P}_x(\zeta^\mu > t). 
\end{align*}

\begin{itemize}
	\item[-]
	\emph{Null drift coefficient}. Let us consider $\mu=0.$
	Then, $\forall\, c\geq 0$,
	\begin{align}
	\label{repRemarkDriftCoef}
	P_t^0 \bar{\mathbf{1}}(x) = \mathbf{P}_0(L_t < x) +  \mathbf{E}_0\left[  e^{-c (L_t -x)} \, \mathbf{1}_{(L_t \geq x)} \right], \quad t\geq 0
	\end{align}
	solves the equation \eqref{probExit} with 
	\begin{align}
	\label{solBCdriftProp}
	\mathfrak{D}^{\frac{1}{2},0}_t u(t, 0) + c\, u(t, 0) = 0, \quad t \geq 0
	\end{align}
	where the tempered derivative becomes the Caputo derivative $D^\frac{1}{2}_t$ (see formula \eqref{DerSakeSimpl} with $\eta=0$). Notice that the corresponding multiplicative functional is associated with the elastic Brownian motion with no drift. In particular, $P^0_t \bar{\mathbf{1}}(x) = \mathbf{E}_x[M^0_t]$ where $M^0_t = \exp(-c \gamma_t)$ emerges in case of Robin boundary condition. Let us consider $x=0$ for the sake of simplicity. We immediately see that
	\begin{align*}
	M^0_t \stackrel{law}{=} e^{-c\, L_t}
	\end{align*}
	where the right-hand side comes out from \eqref{repRemarkDriftCoef}. The equation  \eqref{solBCdriftProp} can be associated with $r(t)$ in Proposition \ref{propSolBCgeneral} with $c_0=a > b=0$. In particular, $u(t,0)$ coincides with 
	\begin{align*}
	r(t)= E_\frac{1}{2}(-c \sqrt{t}) = \mathbf{E}_0[e^{-c\, L_t}]
	\end{align*}
	which is the Mittag-Leffler function introduced in \eqref{BChelpful2}.
	
	We also notice that $\mathbf{P}_0(L_0 < 0)=1-\mathbf{P}_0(L_0=0) = 0$ and $\mathbf{1}_{(L_0\geq x)} \to 1$ as $x\to 0^+$. On the other hand, for $t>0$, as $x \to 0^+$, $\mathbf{P}_0(L_t < x)\to 0$ and $\mathbf{1}_{(L_t \geq x)} \to 1$. 
	\item[-]
	\emph{Null elastic coefficient}. For 
	$$\mu \geq 0, c = 0$$ 
	we obtain that
	\begin{align*}
	P_t^\mu \bar{\mathbf{1}}(x) = 1  , \quad \forall\, x.
	\end{align*}
	The boundary behaviour is characterized by
	\begin{align}
	\label{BCreflectionCASEI}
	\mathfrak{D}^{\frac{1}{2},\mu}_t u(t,0) + \mu\, u(t,0) = \mu, \quad t > 0
	\end{align}
	for which the function $u(t,0)$ can be associated with $r(t)$ in Proposition \ref{propSolBCgeneral} with $a=b=\mu$. It follows that
	\begin{align*}
	r(t) = 1 \quad \forall\, t \geq 0
	\end{align*}
	coincides with $u(t,0)$, $t\geq 0$.
	
	The lifetime $\zeta^\mu$ of the process is infinite almost surely. Indeed, $\forall\, x \in [0, \infty)$, $\mathbf{P}_x(\zeta^\mu > t)=1$ for any $t\geq 0$. The multiplicative functional $M^\mu_t=\mathbf{1}_{(t < \infty)}$ is associated with reflection at $x=0$ of the drifted Brownian motion. 
	\item[-] Let us consider
	$$c \in (0, \infty).$$ 
	We have  $\mu + c =a > b =\mu$. Then, from Proposition \ref{propSolBCgeneral}, $a-b=c > 0$ implies that $0< r(t) < 1$ $\forall\, t>0$. In particular, $u(t,0)$ coincides with
	\begin{align}
	\label{caseElliptic}
	r(t) = \frac{\mu}{\mu + c} \mathbf{P}_0(L_t \geq T_{\mu + c}) + \mathbf{P}_0(L_t < T_{\mu+c}).
	\end{align}
	Equivalently, by using representation \eqref{solmuSemigRep4} we have that
	\[
	r(t) = \mathbf P_0(L_t \wedge T_\mu < T_{c})
	\]
	\item[-] Let us consider 
	$$c \to \infty \quad \textrm{with} \quad \mu \geq 0.$$ Obviously, this special case does not completely agree with the inital datum. Formulas \eqref{solmuSemigRep} and \eqref{solmuSemigRep2} take the form
	\begin{align*}
	u(t,x) = 1- e^{-x \mu }\, \mathbf{P}_0(L_t \geq x) = 1- P(L_t \wedge T_\mu \geq x)
	\end{align*}
	for which $u(t,0)=1 - \mathbf{P}_0 (L_t \geq 0) = 0$. The formal limit in \eqref{BCprobprobExit} gives the Dirichlet boundary condition. This corresponds to the fact that, by exploiting the representation \eqref{solmuSemigRep3}, we obtain
	\[
	u(t,x) = 1 - \mathbf P_0\left(\max_{0 \leq s \leq t} X^{-\mu}_s \geq x\right) = \mathbf P_x\left(\min_{0 \leq s \leq t} X^{\mu}_s > 0\right) = \mathbf P_x(\zeta^\mu > t)
	\]	
	where $ \zeta^\mu $ now represents the lifetime of a drifted Brownian motion with an absorbing barrier at zero.
\end{itemize}

\subsection{The negatively drifted Brownian motion}

We focus on the function 
$$u \in C^{1,2}(AC(0, \infty)\times [0, \infty), [0, \infty))$$ solving 
\begin{equation}
\label{probExit-neg}
\begin{cases}
\displaystyle  \frac{\partial u}{\partial t} = -\mu \frac{\partial u}{\partial x} + \frac{\partial^2 u}{\partial x^2}\\
\displaystyle u(0, x) = \mathbf{1}(x \geq 0)
\end{cases}
\end{equation}
with the boundary condition
\begin{align}
\label{BCprobprobExit-neg}
\mathfrak{D}^{\frac{1}{2},\mu}_t u(t, 0)  + c \, u(t,0) = 0, \quad t > 0
\end{align}
%
%
where $ c $ plays now the role of the elastic coefficient in the condition
\begin{equation}
\label{BCstandard-neg}
\frac{\partial u}{\partial x}(t,0) = c\, u(t,0), \quad t>0.
\end{equation} 
Formally, the relation between the conditions \eqref{BCprobprobExit-neg} and \eqref{BCstandard-neg}
is justified by the equation
\begin{align*}
\mathfrak{D}^{\frac{1}{2},\mu}_t \ell(t,x) = - \frac{\partial}{\partial x} \ell(t,x)\quad t>0, \, x>0.
\end{align*} 
corresponding to \eqref{probell} in the tempered stable case.

For the negatively drifted Brownian motion $\{\widetilde{X}^{-\mu}_t\}_{t\geq 0}$ with generator $(G_{-\mu}, D(G_{-\mu}))$ we write
\begin{align}
\label{solmuSemig-neg}
P_t^{-\mu} f(x) = \mathbf{E}_x[f(\widetilde X^{-\mu})]
\end{align}
which is the analogue of \eqref{solmuSemig}.

\begin{theorem}
\label{thmII}
Let us consider $u=u(t,x)$ given in \eqref{solmuSemig-neg}. Then, the following statements hold:
\begin{itemize}
\item[i)] $u$ is the unique (classical) solution to \eqref{probExit-neg} - \eqref{BCprobprobExit-neg};
\item[ii)] $u$ is the unique (classical) solution to \eqref{probExit-neg} - \eqref{BCstandard-neg};
\item[iii)] $u$ has the probabilistic representation
\begin{align*}
u(t,x) = 1 - \mathbf{E}_0 \left[ \left( 1 - e^{- c\,(L_t - x) } \right) \mathbf{1}_{(L_t > x)} \right]
\end{align*}
where $L$ is an inverse to a relativistic stable subordinator with symbol \eqref{symbGenSub}, $\eta=(-\mu/2)^2$.
\end{itemize}
\end{theorem}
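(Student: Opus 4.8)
The plan is to mirror the proof of Theorem \ref{thmI}, with the rôle of the drift reversed. Starting from \eqref{solmuSemig-neg} with $f=\mathbf 1(\,\cdot\,\geq 0)$ and the kernel \eqref{density:pmu} with $\mu$ replaced by $-\mu$, i.e. $p^{-\mu}(t,x,y)=e^{-\mu^2 t/4}e^{\mu(x-y)/2}p^0(t,x,y)$, I would first compute $\widetilde u(\lambda,x)=\int_0^\infty e^{-\lambda t}u(t,x)\,dt$ by repeating the Appendix computation that leads to \eqref{LTuTHM1}, using the representations \eqref{density:p}--\eqref{density:palt} and the Gaussian Laplace transform. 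The only change with respect to the positive-drift case is that the factor $e^{\mu x/2}$ (instead of $e^{-\mu x/2}$) turns the outer exponent $\sqrt{\lambda+\eta}+\sqrt\eta$ into $\sqrt{\lambda+\eta}-\sqrt\eta=\Phi(\lambda)$, while, since $\sqrt\eta=\mu/2$ and $c_{-\mu}=c_0+\mu/2=c_0+\sqrt\eta$, one has $c_0+\sqrt{\lambda+\eta}=c_{-\mu}+\Phi(\lambda)$. This gives
\[
\widetilde u(\lambda,x)=\frac1\lambda-\frac1\lambda\,\frac{c_{-\mu}}{c_{-\mu}+\Phi(\lambda)}\,e^{-x\Phi(\lambda)},\qquad \lambda>0,\ x\geq 0 .
\]
The absence of the prefactor $e^{-x\mu}$ that appeared in \eqref{solmuSemigRep} is the structural signature of $c_2=0$ in \eqref{BCprobprobExit-neg}.

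From here \textbf{(iii)} is immediate: applying \eqref{LT1} of Proposition \ref{proppotentialL} with $\theta=c_{-\mu}$ and multiplying by $c_{-\mu}$,
\[
\frac{c_{-\mu}}{\lambda}\,\frac1{c_{-\mu}+\Phi(\lambda)}\,e^{-x\Phi(\lambda)}=\int_0^\infty e^{-\lambda t}\,\mathbf E_0\big[(1-e^{-c_{-\mu}(L_t-x)})\,\mathbf 1_{(L_t\geq x)}\big]\,dt ,
\]
so that, by injectivity of the Laplace transform, $u(t,x)=1-\mathbf E_0[(1-e^{-c_{-\mu}(L_t-x)})\mathbf 1_{(L_t> x)}]$ for $\eta=(\mu/2)^2$ (the event $\{L_t\geq x\}$ may be replaced by $\{L_t>x\}$ since $L_t$ has a density). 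For \textbf{(i)}: at $x=0$ one has $u(t,0)=\mathbf E_0[e^{-c_{-\mu}L_t}]$, $u(0,0)=1$ and $\widetilde u(\lambda,0)=\frac{\Phi(\lambda)}{\lambda(c_{-\mu}+\Phi(\lambda))}$; then \eqref{symbDermu} yields
\[
\int_0^\infty e^{-\lambda t}\,\mathfrak{D}^{\frac{1}{2},\mu}_t u(t,0)\,dt=\Phi(\lambda)\widetilde u(\lambda,0)-\frac{\Phi(\lambda)}{\lambda}=-\,c_{-\mu}\,\widetilde u(\lambda,0),
\]
which is the Laplace transform of $\mathfrak{D}^{\frac{1}{2},\mu}_t u(t,0)+c_{-\mu}u(t,0)=0$, i.e. \eqref{BCprobprobExit-neg}. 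Uniqueness is obtained as in the remark following Theorem \ref{thmI}: Laplace transforming \eqref{probExit-neg} in $t$ reduces the problem to finding the bounded solution of $v_\lambda''-\mu v_\lambda'-\lambda v_\lambda+1=0$ with $v_\lambda(0)=\widetilde u(\lambda,0)$ prescribed, which is unique. Finally \textbf{(ii)} needs no proof: the Robin problem \eqref{probExit-neg}--\eqref{BCstandard-neg} is classical with semigroup solution \eqref{solmuSemig-neg}, and the equivalence of \eqref{BCprobprobExit-neg} and \eqref{BCstandard-neg} on $D(G_{-\mu})$ is the one already recorded before the statement via $\mathfrak{D}^{\frac{1}{2},\mu}_t\ell(t,x)=-\partial_x\ell(t,x)$.

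The bulk of the work is routine Laplace-transform bookkeeping; the one point that genuinely requires care is tracking the sign flip $\mu\mapsto-\mu$ through the reflection and elastic terms of $p^0$ in \eqref{density:p}--\eqref{density:palt}, in order to verify both that the outer exponent collapses to exactly $\Phi(\lambda)$ (with \emph{no} residual $e^{-x\mu}$, in contrast with Theorem \ref{thmI}) and that the elastic constant appearing is $c_{-\mu}=c_0+\mu/2$ rather than $c_\mu=c_0-\mu/2$. Once $\widetilde u(\lambda,x)$ is pinned down in the displayed form, parts (iii), (i) and uniqueness follow at once from Proposition \ref{proppotentialL} and \eqref{symbDermu}, in complete parallel with Theorem \ref{thmI}.
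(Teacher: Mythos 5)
Your proposal is correct and follows essentially the same route as the paper: obtain $\widetilde u(\lambda,x)$ by the substitution $\mu\mapsto-\mu$ in \eqref{LTuTHM1} (equivalently \eqref{LTuTHM1-new}), invert via Proposition \ref{proppotentialL} with $\theta=c_{-\mu}$ to get the representation in (iii), and verify the non-local boundary condition by computing $\Phi(\lambda)\widetilde u(\lambda,0)-\Phi(\lambda)/\lambda=-c_{-\mu}\widetilde u(\lambda,0)$ from \eqref{symbDermu}. The only cosmetic differences are that you use \eqref{LT1} in one stroke where the paper splits the inversion into two terms, and that you make the ODE uniqueness argument explicit, which the paper only records in a remark after Theorem \ref{thmI}.
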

\begin{proof}
Point iii). By exploiting \eqref{eq:app-res} in the Appendix with $-\mu$ in place of $\mu$ we get
\begin{align}\label{eq:lap-sol-mu-neg}
\widetilde{u}(\lambda, x) &=  R_\lambda \mathbf{ \bar 1} (x)
 =  \frac{1}{\lambda} - \frac{1}{\lambda} \left( 1-  \frac{\Phi(\lambda)}{c  + \Phi(\lambda)} \right) e^{-x \Phi(\lambda)}
\end{align}
where, as usual, $\Phi = \Phi(\lambda)= \sqrt{\lambda + \mu^2/4} - \mu/2$. Since
\begin{align*}
\frac{e^{-x\Phi(\lambda)}}{\lambda} = \int_0^\infty e^{-\lambda t} \mathbf{P}_0(L_t > x)\, dt
\end{align*}
and
\begin{align*}
\frac{\Phi(\lambda)}{\lambda} \frac{e^{-x \Phi(\lambda)}}{\Phi(\lambda) + c} 
= &
\frac{\Phi(\lambda)}{\lambda} \int_0^\infty e^{-cw} e^{-(x+w) \Phi(\lambda)} dw 
\\
= & \int_0^\infty e^{-\lambda t} \left( \int_0^\infty e^{-cw} \ell(t, x+w)\, dw \right) dt\\
= & \int_0^\infty e^{-\lambda t} \mathbf{E}_0\left[ e^{-c (L_t - x)} \, \mathbf{1}_{(L_t >x)} \right] dt
\end{align*}
the solution $u$ can be written as
\begin{align*}
u(t,x) = 1 - \mathbf{E}_0[\mathbf{1}_{(L_t >x)}] + \mathbf{E}_0\left[ e^{-c (L_t - x)} \, \mathbf{1}_{(L_t >x)} \right]
\end{align*}
that is
\begin{align*}
u(t,x) = 1 - \mathbf{E}_0 \left[ \left(1 - e^{-c (L_t -x)} \right) \mathbf{1}_{(L_t >x)} \right].
\end{align*}
Then we get the result claimed in iii).

In order to prove point ii) we check that $ u(t, 0) $ satisfies \eqref{BCstandard-neg}. In fact from \eqref{eq:el-res0} with drift $ -\mu $ we get
\begin{equation}\label{eq:lap-sol-mu-neg0}
	\tilde u (\lambda, 0)  = 
	\frac 1{(\sqrt{\lambda + \mu^2/4} + \frac \mu 2 )(\sqrt{\lambda + \mu^2/4} - \frac \mu 2 + c)} 
	= 
	\frac{1}{(\Phi + \mu)(\Phi + c)}
\end{equation}

and then from \eqref{symbDermu}, the Laplace transform of the LHS of \eqref{BCprobprobExit-neg} is 
\begin{align}
\label{lapUniqM-neg}
\Phi  \tilde u(\lambda, 0) - \frac{\Phi}{\lambda} u(0, 0) + c \tilde u(\lambda, 0) &=
\frac{\Phi}{(\Phi + \mu)(\Phi + c)} - \frac{\Phi}{\lambda} 
+ \frac{c}{(\Phi + \mu)(\Phi + c)}
\\
&=
\frac{1}{\Phi + \mu} - \frac{1}{\Phi + \mu} = 0. \notag 
\end{align}
verifying  that $ u(t,0) $ satisfies  \eqref{BCprobprobExit-neg}.

Point i) does not need to be proved. This concludes the proof. 
\end{proof}

As in Corollary \ref{coro:equivI},  also in this case we are able to show that $\overline{M}^{-\mu}_t$ is equivalent to $M^{-\mu}_t$ associated with $X^{-\mu}$ and therefore, with the generator $(G_{-\mu}, D(G_{-\mu}))$. In particular, $\overline{M}^{-\mu}_t$ is uniquely characterized by the boundary condition \eqref{BCprobprobExit-neg}.

Below we present the last result for the negatively drifted Brownian motion. 

\begin{theorem}
\label{last-thmII}
The solution to \eqref{probExit-neg} - \eqref{BCprobprobExit-neg} has the probabilistic representation
\begin{align*}
\displaystyle u(t,x) = 1 - \mathbf{E}_0 \left[ \left( 1- e^{-c\, (S^{\mu}_t - x)} \right) \mathbf{1}_{(S^{\mu}_t > x)} \right]
\end{align*}
where
\begin{align*}
S^{\mu}_t = \max_{0\leq s \leq t} X^{\mu}_s, \quad t>0,\; \mu>0.
\end{align*}
\end{theorem}
\begin{proof}
The proof follows immediately from Theorem \ref{thm0}.
\end{proof}

\section{Conclusion}
We observe that Theorem \ref{last-thmII} is the analogue to Theorem \ref{last-thmI}. Such results give clear representations of the solutions, in both cases, in which we have positive or negative drift,
\begin{align*}
u(t,x) = 1 - \mathbf{E}_0 \left[ \left( 1 - e^{- c (S^{\pm \mu}_t -x)} \right), S^{\pm \mu}_t > x \right], \quad t>0,\; x >0.
\end{align*}
Moreover, in our view, Theorem \ref{thmIbis} and Theorem \ref{thmII} seems to be quite interesting with regards to the applications. Indeed, they are written in terms of very simple processes, that is, non-decreasing processes on $(0, \infty)$. Formula \eqref{resultGravShir} suggests also a representation in terms of the local time which is usually sneaky. Some fruitful applications of such representations may arise in numerical solutions, optimization, inverse problems and so forth. Indeed, in  these contexts, it is important to obtain fast and accurate simulations. On the other hand, the proposed algorithms may result in high demanding computational tasks, as for the Monte Carlo approximations for instance. For a description of simulation algorithms for a Brownian motion on the half-line with boundary conditions the interested reader can consult \cite{sticky-numerical} and references therein.  Such algorithms are based on spatial discretizations for the generator of the process. Clearly our results provide a simpler and immediate alternative which only requires the simulation of the increments of a tempered subordinator, which is a straightforward task (see e.g. \cite{MeeSabChe}).

\section{Appendix}
\subsection{Elastic Brownian motion with drift.}
Consider EDBM $ \widetilde X^{\mu} $, $ \mu \in \mathbb R $
(at this stage we do not make the distinction between positive or negative drift yet).
 Let $ \hat X^\mu $ be a reflecting Brownian motion with drift.
The process $ \widetilde X^{\mu} $ can be represented in the following way
%
\begin{equation}
	\widetilde {X}^\mu_t = 
	\begin{cases}
		\hat X^\mu_t & t \leq \zeta^\mu \\
		\dagger & t > \zeta^\mu 
	\end{cases}
\end{equation}
where $ \zeta^\mu  = \inf\{ t: \gamma_t > T_c\}$ is the lifetime of the process, $ T_c $ is an independent 
exponential random variable with parameter $ c $ and $ c > 0 $ denotes the elastic coefficient.
The distribution of $ \widetilde X^\mu $ can be obtained as follows. First consider the case where the starting point
$ x=0 $.  Then 
\begin{align*}
	P_0(\widetilde X^\mu_t \in A) &= \int_A p(t, 0, y) \mathrm d y = 
	P_0(\hat X^\mu_t \in A, \zeta^\mu > t) \\
	&=
	\mathbf E_0 \left[ 1_{(\hat X^\mu_t \in A)} P(\zeta^\mu > t|\mathcal F_t)\right]
	\\
	&=
	\mathbf E_0 \left[ 1_{(\hat X^\mu_t \in A)} e^{-c \gamma_t}\right] \\
	&=
	\int_{A} \int_{0}^{\infty} e^{- c v} P(\hat X^\mu_t \in \mathrm d y, \gamma_t \in \mathrm d v) \\
	&=
	\int_{A} \int_{0}^{\infty} e^{- c v} P(S^{-\mu}_t - B^{-\mu}_t \in \mathrm d y, S^{-\mu}_t \in \mathrm d v) \\
\end{align*}
where in the last step we use relation 
\begin{equation*}
	(\hat X^{\mu}, \gamma^\mu ) \stackrel{d}{=} (S^{-\mu} - B^{-\mu}, S^{-\mu})
\end{equation*}
from \cite{GravShir}, where $ S^{-\mu} $ now denotes the maximum of the Brownian motion with drift $ B^{-\mu} $. By using the explicit expression of the joint distribution of $ (B^{-\mu}, S^{-\mu}) $ 
(see e.g. \cite{Shep79})
one has
\begin{align}\label{eq:elastic-dens0}
	p(t, 0, y) &=
	2e^{\frac \mu 2 y - \frac{\mu^2}{4}t} 
	\left[
	g(t, y) - \left(\frac \mu 2 + c \right)
	\int_{0}^{\infty} e^{-\left(\frac \mu 2 + c \right)v}
	g(t, v+y) \,\mathrm d v
	\right] \notag 
\end{align}
Finally for $ x > 0, y > 0 $, by the Markov property of $ \widetilde  X^\mu $ we have that
\begin{align}
	P_x(\widetilde X_t^\mu \in \mathrm d y) &= 
	P_x(X_t^\mu \in \mathrm d y, \tau_0^\mu > t) +  
	P_x(\widetilde  X^\mu \in \mathrm d y, \tau_0^\mu > t)
\end{align}
then 
\[
p(t, x, y)  = 
	\bar p (t, x, y)  + 
\int_{0}^{t} f^{-\mu}(-x, 0, s) p(t-s, 0, y) \,\mathrm d s  
\]
where $ \bar p $ denotes the density on $(0, \infty)$ of a killed Brownian motion with drift
and $ f^{-\mu} $ is the density of the first passage time through 0 of a Brownian motion with drift $ -\mu $ 
and starting point $ -x $ (for details on this last step see \cite{Iafrate19} formula (28)).

The following results hold for the EDBM.
\begin{proposition}
	\begin{enumerate}[(i)]
		\item The Green function of the elastic Brownian motion with drift reads
		\begin{equation}\label{eq:el-green}
			G_\lambda(x, y) = 
			\left\{ 
			\begin{aligned}
				\frac{1}{2\Lambda} e^{- (\frac \mu 2+ \Lambda)x} \left[
				e^{(\frac \mu 2 + \Lambda)y} + \frac{\Lambda - \frac \mu 2- c}{\Lambda + \frac \mu 2 + c}
				e^{(\frac \mu 2 - \Lambda)y}
				\right] & \qquad x> y\\
				\frac{1}{2\Lambda} e^{(\frac \mu 2 - \Lambda)y}  \left[
				e^{(\Lambda - \frac \mu 2)x} + \frac{\Lambda - \frac \mu 2- c}{\Lambda + \frac \mu 2 + c}
				e^{-(\frac \mu 2 + \Lambda)x}
				\right] & \qquad  x \leq y.
			\end{aligned}
			\right. 
		\end{equation}
		where $ \Lambda = \sqrt{\lambda + \frac{\mu^2}{4}} $.
		
		\item  The resolvent of the elastic Brownian motion with drift reads
		\begin{align}\label{eq:el-resolvent}
			R_\lambda f(x) &= 
			%
			\frac{1}{2\Lambda} \left[
			e^{- (\frac \mu 2+ \Lambda)x} 
			\int_{0}^{x} 
			e^{(\frac \mu 2 + \Lambda)y} 
			f(y) \mathrm d y 
			 + e^{(\Lambda - \frac \mu 2)x}
			 \int_{x}^{\infty}  e^{(\frac \mu 2 - \Lambda)y} f(y) \mathrm dy
			\right]
			\\ 
			& \qquad +
			\frac{1}{2\Lambda} e^{-(\frac \mu 2 + \Lambda)x}
			 \frac{\Lambda - \frac \mu 2- c}{\Lambda + \frac \mu 2 + c}
			\int_{0}^{\infty}  e^{(\frac \mu 2 - \Lambda)y} f(y) \mathrm dy 
			\notag 
		\end{align}
		
		\item  
		The right limit of the resolvent at zero is  
		\begin{equation}\label{eq:el-res0}
			R_\lambda f(0^+) = 
			\frac{1}{\Lambda + \frac \mu 2 + c} \int_{0}^{\infty} e^{(\frac{\mu}{2} - \Lambda)y} f(y) \,\mathrm d y
		\end{equation}
	
	\end{enumerate}
\end{proposition}

\begin{proof}
	
	By taking the $ \lambda -$Laplace transform of \eqref{density:pmu}, and by using
	the formula
	\[
	\int_{0}^{\infty} e^{-\lambda t} g(t, x) \,\mathrm d t = \frac{1}{2\sqrt \lambda}
	e^{-\sqrt{\lambda} |x|}
	\]
	one has
	\begin{align*}
		G_\lambda(x, y) &= 
		\frac{e^{\frac{\mu}{2}(y-x)}}{2\Lambda}
		\left[
		e^{- \Lambda |x-y|} + 	e^{- \Lambda (x+y)} 
		-2\left(\frac \mu 2 + c\right) \int_{0}^{\infty} 
		e^{-\left(\frac \mu 2 + c \right)v} e^{- \Lambda (x + y + v)} \,\mathrm d v
		\right]
		\\
		&=
		\frac{e^{\frac{\mu}{2}(y-x)}}{2\Lambda}
		\left[
		e^{- \Lambda |x-y|} + 	e^{- \Lambda (x+y)} 
		\left(1 - \frac{2\left(\frac \mu 2 + c\right)}{\Lambda + \frac \mu 2 + c}\right)
		\right]
		\notag \\
		&=
		\frac{e^{\frac{\mu}{2}(y-x)}}{2\Lambda}
		\left[
		e^{- \Lambda |x-y|} + 	e^{- \Lambda (x+y)} 
		\frac{\Lambda - \frac \mu 2 - c}{\Lambda + \frac \mu 2 + c}
		\right]
	\end{align*}
	which can be readily rearranged into \eqref{eq:el-green}.
	
	Formula \eqref{eq:el-resolvent} can be obtained by integrating \eqref{eq:el-green} 
	in the following way
	\begin{align}\label{eq:el-resolvent1}
	R_\lambda f(x) &= \int_{0}^{\infty} G_\lambda(x, y) f(y) \,\mathrm d y 
	\\
	&=
	\frac{1}{2\Lambda} e^{- (\frac \mu 2+ \Lambda)x} 
	\int_{0}^{x} \left[
	e^{(\frac \mu 2 + \Lambda)y} + \frac{\Lambda - \frac \mu 2- c}{\Lambda + \frac \mu 2 + c}
	e^{(\frac \mu 2 - \Lambda)y}
	\right] f(y) \mathrm d y 
	\notag \\ 
	& \qquad +
	\frac{1}{2\Lambda} \left[
	e^{(\Lambda - \frac \mu 2)x} + \frac{\Lambda - \frac \mu 2- c}{\Lambda + \frac \mu 2 + c}
	e^{-(\frac \mu 2 + \Lambda)x}
	\right]
	\int_{x}^{\infty}  e^{(\frac \mu 2 - \Lambda)y} f(y) \mathrm dy 
	\notag 
	\end{align}
	which can be easily simplified into \eqref{eq:el-resolvent}. Finally by taking the limit $ x \to 0^+ $ in \eqref{eq:el-resolvent1} one has
	\[
	R_\lambda f(0^+) = 
	\frac{1}{2\Lambda} \left[ 1 + \frac{\Lambda - \frac \mu 2- c}{\Lambda + \frac \mu 2 + c} \right]
	\int_{0}^{\infty}  e^{(\frac \mu 2 - \Lambda)y} f(y) \mathrm dy 
	\]
	and the result follows.
\end{proof}

\begin{proposition}
	The resolvent \eqref{eq:el-resolvent} applied to $ f = \mathbf{\bar 1} $ reads
	\begin{equation}\label{eq:app-res}
		R_\lambda \mathbf {\bar 1} (x) = 
		\frac 1 \lambda  - 
		\frac 1 \lambda \left( 1 - \frac{\Lambda + \frac \mu 2 }{\Lambda + \frac \mu 2 + c} \right) 
		e^{-(\frac \mu 2 + \Lambda)x}.
	\end{equation}
\end{proposition}

\begin{proof}
	
Note that 
\begin{equation}\label{eq:lambda-eq}
	\left(\Lambda + \frac \mu 2\right)\left(\Lambda - \frac \mu 2\right) = \left(\sqrt{\lambda + \frac{\mu^2}4} - \frac \mu 2 \right)\left(\sqrt{\lambda + \frac{\mu^2}4} + \frac \mu2 \right)
	= \lambda.
\end{equation}

By substituting $ f = \mathbf{\bar 1} $ in \eqref{eq:el-resolvent} we have
\begin{align}
	R_\lambda \mathbf {\bar 1} (x) &=
	\frac{e^{- (\frac \mu 2+ \Lambda)x} }{2\Lambda(\Lambda + \frac \mu 2)}
	\left( e^{(\frac \mu 2 + \Lambda )x} - 1 \right)
	+ 
	\frac{1}{2 \Lambda (\Lambda - \frac \mu 2) }
	+
	\frac{1}{2\Lambda} e^{-(\frac \mu 2 + \Lambda)x}
	\frac{\Lambda - \frac \mu 2- c}{\Lambda + \frac \mu 2 + c} \cdot 
	\frac{1}{\Lambda - \frac \mu 2} 
	\notag 
	\\
	&=
	\frac 1 {2\Lambda} \left(\frac{1}{\Lambda + \frac \mu 2}  + \frac{1}{\Lambda - \frac \mu 2} \right)
	+ 
	\frac 1 {2\Lambda} \left(-\frac{1}{\Lambda + \frac \mu 2}  + \frac{1}{\Lambda - \frac \mu 2}
	\frac{\Lambda - \frac \mu 2- c}{\Lambda + \frac \mu 2 + c}
	\right) e^{-(\frac \mu 2 + \Lambda)x}
	\notag \\
	&=
	\frac 1 \lambda 
	+ \frac 1 {2\Lambda} 
	\left(
	-\frac{1}{\Lambda + \frac \mu 2}  - \frac{1}{\Lambda - \frac \mu 2} 
	+ 
	\frac{1}{\Lambda - \frac \mu 2} \cdot \frac{2\Lambda}{\Lambda + \frac \mu 2 + c}
	\right) e^{-(\frac \mu 2 + \Lambda)x}
	\notag \\
	&=
	\frac 1 \lambda  - 
	\frac 1 \lambda \left( 1 - \frac{\Lambda + \frac \mu 2 }{\Lambda + \frac \mu 2 + c} \right) 
	e^{-(\frac \mu 2 + \Lambda)x}
	\notag 
\end{align}

\end{proof}

\subsection{Proof of the statements in Section \ref{helpfulSection}}

\begin{proof}[Proof of the formula \eqref{helpfulProof}]
Since
\begin{align*}
\int_0^\infty e^{-\lambda t} \, g(t,x-y)\, dt = \frac{1}{2} \frac{e^{-|x-y| \sqrt{\lambda}}}{\sqrt{\lambda}} \quad \textrm{and} \quad \int_0^\infty e^{-\lambda t} \, g(t,x+y)\, dt = \frac{1}{2} \frac{e^{-(x+y) \sqrt{\lambda}}}{\sqrt{\lambda}}
\end{align*}
we have that
\begin{align}
\label{proof1part1}
\int_0^\infty e^{-\lambda t} \int_0 \big( g(t, x-y) - g(t,x+y) \big) dy \, dt = \frac{1}{\lambda} - \frac{1}{\lambda} e^{-x\sqrt{\lambda}}
\end{align}
Now we note that
\begin{align*}
\frac{x}{s}g(s,x) = \frac{x}{s} \frac{e^{-\frac{x^2}{4s}}}{\sqrt{4\pi s}} = - 2 \frac{\partial}{\partial x} \frac{e^{-\frac{x^2}{4s}}}{\sqrt{4 \pi s}}, \quad x \in [0, \infty), \; s>0 
\end{align*}
for which we have 
\begin{align}\label{eq:lap-gauss}
\int_0^\infty e^{-\lambda s} \, g(s,x) \, ds = \frac{1}{2}\frac{e^{-x \sqrt{\lambda}}}{\sqrt{\lambda}}
\end{align}
and
\begin{align*}
\int_0^\infty e^{-\lambda s}\, \frac{x}{s}g(s,x)  \, dx =  -\, \frac{\partial}{\partial x} \frac{e^{-x \sqrt{\lambda}}}{\sqrt{\lambda}} =  e^{-x \sqrt{\lambda}} .
\end{align*}
Then, 
\begin{align}
\label{proof1part2}
\int_0^\infty e^{-\lambda t} \left( 1 - \int_0^t \frac{x}{s} g(s,x)\, ds \right) dt = \frac{1}{\lambda} - \frac{1}{\lambda} e^{-x \sqrt{\lambda}}
\end{align}
By comparing \eqref{proof1part1} with \eqref{proof1part2} we prove that, $\forall\, x$,
\begin{align*}
P^0_t \mathbf{1}(x) = F(t,x).
\end{align*}
Moreover, for our convenience, we observe that
\begin{align*}
2 \int_0^\infty e^{-\lambda t}  \int_0^\infty e^{-c_0 \, w} g(t,w+x) \, dw\, dt 
= & \int_0^\infty e^{-c w} \frac{e^{-(w+x)\sqrt{\lambda}}}{\sqrt{\lambda}} \, dw\\
= & \frac{e^{-x \sqrt{\lambda}}}{\lambda} \frac{\sqrt{\lambda}}{c + \sqrt{\lambda}}
\end{align*}
and therefore
\begin{align}
\int_0^\infty e^{-\lambda t}\, P^0_t \mathbf{1}(x)\, dt = \frac{1}{\lambda} - \frac{e^{-x \sqrt{\lambda}}}{\lambda} \frac{c}{c + \sqrt{\lambda}}.
\end{align}
\end{proof}

\begin{proof}[Proof of the formula \eqref{helpfulProbRep}]

Let us consider the non-negative and non-decreasing process $A_t$, $t\geq 0$ with probability
\begin{align*}
\mathbf{P}_0(A_t > x) = 2 \int_x^\infty g(t,s)\,ds .
\end{align*}
Consider the inverse $A^{-1}_t$, $t\geq 0$ which is, by construction, a non-negative and non-decreasing process. By definition we have that
\begin{align*}
\mathbf{P}_0(A^{-1}_x < t) = \mathbf{P}_0(A_t > x)
\end{align*}
with 
\begin{align*}
\frac{\partial}{\partial t} \mathbf{P}_0(A^{-1}_x < t) = 2 \int_x^\infty \frac{\partial^2}{\partial s^2} g(t,s)\, ds = \frac{x}{t} g(t,x). 
\end{align*}
Since $\mathbf{P}_0(A^{-1}_0 < t) = \mathbf{P}_0(A_t > 0)=1$ and $A_t$ is continuous we obtain $F(t,0)=\mathbf{E}_0[e^{-c\, A_t}]$.
\end{proof}

\begin{proof}[Proof of the formula \eqref{BChelpful1}]
It holds that
\begin{align*}
\frac{\partial F}{\partial x} (t,x) \bigg|_{x=0} = c_0 \, F(t,x) \bigg|_{x=0}.
\end{align*}
Indeed, $F$ is the density law of the elastic Brownian motion on $[0, \infty)$.
\end{proof}

\begin{proof}[Proof of the formula \eqref{BChelpful2}]
Since $L^0_t$ is the inverse to a stable subordinator, the density $\ell(t,x)= 2\, g(t,x)$ is such that 
\begin{align*}
D^\frac{1}{2}_t \ell = - \frac{\partial \ell}{\partial x} 
\end{align*}
and the potential
\begin{align*}
\hat{\ell}(t, c) = \int_0^\infty e^{-c \,w} \ell(t,w)\, dw = E_\frac{1}{2}(- c\, \sqrt{t}) 
\end{align*}
is the Mittag-Leffler function. It is well-known that the Mittag-Leffler is an eigenfunction for the Caputo derivative. That is,
\begin{align*}
D^\frac{1}{2}_t \hat{\ell} = - c \, \hat{\ell}.
\end{align*}
By observing that $\hat{\ell}(t,c)= F(t,0)$ we get formula \eqref{BChelpful2}. The problem to check that
\begin{align*}
D^\frac{1}{2}_t F(t,x) \bigg|_{x=0} = - c \, F(t,x) \bigg|_{x=0}
\end{align*}
is part of the results presented in this work.
\end{proof}

\end{document}